\journal{}
\newcommand{\lc}{\left[}
\newcommand{\rc}{\right]}
\newcommand{\Hb}{\mathbb{H}}
\newcommand{\ind}[1]{\mathbbm{1}_{\lrb{#1}}}
\newcommand{\inprod}[2]{\langle#1,#2\rangle_\Hb}
\newcommand{\lrp}[1]{\left(#1\right)}
\newcommand{\lrc}[1]{\left[#1\right]}
\newcommand{\lrb}[1]{\left\{#1\right\}}
\newcommand{\Esp}[1]{\mathbb{E}\lc #1\rc}
\theoremstyle{plain}% Theorem-like structures provided by amsthm.sty
\newtheorem{theorem}{Theorem}
\newtheorem{lemma}{Lemma}
\theoremstyle{definition}
\newtheorem{remark}{Remark}
\xpatchcmd{\proof}{\itshape}{\prooflabelfont}{}{}
\newcommand{\prooflabelfont}{\bfseries}
\begin{document}

\begin{frontmatter}

%% Title, authors and addresses

%% use the tnoteref command within \title for footnotes;
%% use the tnotetext command for theassociated footnote;
%% use the fnref command within \author or \affiliation for footnotes;
%% use the fntext command for theassociated footnote;
%% use the corref command within \author for corresponding author footnotes;
%% use the cortext command for theassociated footnote;
%% use the ead command for the email address,
%% and the form \ead[url] for the home page:
%% \title{Title\tnoteref{label1}}
%% \tnotetext[label1]{}
%% \author{Name\corref{cor1}\fnref{label2}}
%% \ead{email address}
%% \ead[url]{home page}
%% \fntext[label2]{}
%% \cortext[cor1]{}
%% \affiliation{organization={},
%%             addressline={},
%%             city={},
%%             postcode={},
%%             state={},
%%             country={}}
%% \fntext[label3]{}

\title{Testing the goodness-of-ﬁt of a functional autoregressive model} %% Article title
\author[1,3]{W. Gonz\'alez--Manteiga}
\author[2]{M.D. Ruiz--Medina \corref{mycorrespondingauthor}}
\author[1,3]{M. Febrero--Bande}
\affiliation[1]{organization={Univ. de Santiago de Compostela},
	addressline={Dpt. of Statistics, Math. Analysis and Optimization},
	city={Santiago de Compostela, (A Coru\~na)}, postcode={15781},country={Spain}
}
\affiliation[2]{organization={Univ. de Granada},
				addressline={Dpt. of Statistics and Op. Research},
				city={Granada}, postcode={18071},country={Spain}
}
\affiliation[3]{organization={Galician Centre for Mathematical Research and Technology},
	addressline={Campus Vida},
	city={Santiago de Compostela, (A Coru\~na)}, postcode={15782},country={Spain}
}

\cortext[mycorrespondingauthor]{Corresponding author. Email address: \url{mruiz@ugr.es}}

%% use optional labels to link authors explicitly to addresses:
%% \author[label1,label2]{}
%% \affiliation[label1]{organization={},
%%             addressline={},
%%             city={},
%%             postcode={},
%%             state={},
%%             country={}}
%%
%% \affiliation[label2]{organization={},
%%             addressline={},
%%             city={},
%%             postcode={},
%%             state={},
%%             country={}}

%\author{W. Gonz\'alez--Manteiga$^{1}$, M.D. Ruiz--Medina$^{2}$ and M. Febrero--Bande$^{1}$} %% Author name

%% Author affiliation
%\affiliation{organization={$^{1}$Universidad de Santiago de Compostela\\  $^{2}$Universidad de Granada},%Department and Organization
            %addressline={},
            %city={},
            %postcode={},
            %state={},
            %country={}
%            }

%% Abstract
\begin{abstract}
%% Text of abstract
The proposed Goodness--of--Fit  (GoF) test for checking the linear autocorrelation model in a functional time series is based on
an  empirical process, whose residual marks and covariate index set are in a separable Hilbert space $\mathbb{H}.$  A functional central limit theorem is derived providing the convergence of the  empirical process to a time--changed   Wiener process evaluated in a separable Hilbert space $\mathbb{H},$ with subordinator given by the marginal probability of the involved strictly stationary Autoregressive Hilbertian process  (AR$\mathbb{H}$(1) process).  The  large sample behavior of the test statistics is  obtained under  simple and composite  null hypotheses. The consistency of the test is addressed under simple null hypothesis.  The finite--sample performance of the testing procedure, under different families of alternatives, and random projection schemes, is illustrated  in  the Appendix.
\end{abstract}

%%Graphical abstract
%\begin{graphicalabstract}
%\includegraphics{grabs}
%\end{graphicalabstract}

%%Research highlights
%\begin{highlights}
%\item Research highlight 1. The infinite-dimensional asymptotic analysis of GoF in the context of AR$\mathbb{H}$(1) models applying process subordination
%\item Research highlight 2. The introduction of a new family of testing procedures, based on generalized  empirical functional  processes, under simple and composite null hypothesis addressing their asymptotic equivalence
%\end{highlights}

%% Keywords
\begin{keyword}
	AR$\mathbb{H}$(1) process \sep
	generalized functional empirical processes \sep     goodness--of--fit test.
	
	\MSC[2020] Primary 62R10 \sep Secondary 62M99
%% keywords here, in the form: keyword \sep keyword

%% PACS codes here, in the form: \PACS code \sep code

%% MSC codes here, in the form: \MSC code \sep code
%% or \MSC[2008] code \sep code (2000 is the default)

\end{keyword}

\end{frontmatter}

%% Add \usepackage{lineno} before \begin{document} and uncomment
%% following line to enable line numbers
%% \linenumbers

%% main text
%%

%% Use \section commands to start a section
\section{Introduction}
\label{secintro}
Functional time series  models have been extensively analyzed  since the'90s, supporting inference on stochastic processes.
The monograph by \cite{Bosq2000} constitutes a benchmarking in   asymptotic inference from   functional linear time series models in a state space framework. We also refer  the reader to \cite{Kokoszka10, Horvath2012a},  on weakly dependent functional times series analysis, and to \cite{Ferraty06} on nonparametric functional  statistics.

Current advances on statistical testing in functional time series cover different issues related to
  dependence structure, and   independence  (see, e.g., \cite{Hlavka21, Horvath2013, Zhang16};   separability, stationarity and linear correlation range   (see \cite{Constantinou18, Horvath2014, Kokoszka2013, Zhang15, Kokoszka2017});  normality and periodicity  (see
\cite{Gorecki18,Horman18}). Special attention has been paid to the issue of change point testing of the mean,   the variance, and  the autocorrelation operator  (see \cite{Horvath2012a}).

Specification and Goodness-of-Fit (GoF) tests are emerging areas in the Functional Data Analysis (FDA) literature (see \cite{Zhang16, Kim23, Kim24}, among others). In the context of the  functional  linear regression model, the asymptotic analysis of GoF tests in infinite--dimensional spaces  is currently an open challenge (see, e.g., \cite{Cuesta, Alvarez25, GPortugues21}, and the  references therein). Specifically, most of the proposals   are based on the application of random projection methodology
from  Theorem 4.1 in  \cite{Cuestalbertos}, typically  using bootstrap techniques  for calibration.
In the context of functional linear model with scalar response,  GoF asymptotic analysis, under simple and composite null hypothesis, is addressed from  randomly projected empirical processes in \cite{Cuesta}. The GoF asymptotics in the framework of the Functional Linear Model with Functional Response (FLMFR) entails several unsolvable  difficulties, also affecting  the usual asymptotic analysis based on random projection methodology (see, e.g., \cite{Alvarez25, GPortugues21}). In this context,  Theorem 4.1 in  \cite{Cuestalbertos} should be formulated  in terms of a non--degenerate bivariate infinite--dimensional Gaussian measure. This difficulty is overcoming in the present paper by considering inverse subordination of our test statistics, defined from a  Hilbert--valued ($\mathbb{H}$--valued) empirical process indexed by an $\mathbb{H}$--valued covariate. To this aim,  under strictly stationarity,  we introduce, in equation (\ref{eqinf}) below, the analogous to the concept of hitting time, in terms of an infinite--dimensional version of marginal quantile transformation, involving a tight infinite product Gaussian measure (see Theorem 1.2.1 in \cite{DaPrato2002}).

Our proposal extends the GoF asymptotic  analysis performed in
  \cite{KoulStute1999}, in the context of real--valued Markovian time series, to the functional time series framework under an Autoregressive Hilbertian process   of order one (AR$\mathbb{H}$(1) process) scenario. Note that this scenario corresponds  to the case of nondecreasing real--valued function $\psi$ being the identity in
  \cite{KoulStute1999}. In our functional  residual marked empirical process based design of the test statistics, we adopt the approach proposed in Lemma 1(d) in   \cite{Escanciano06}, reformulating it in an infinite--dimensional framework. Specifically,   we apply the fact that  any
 class  of Borel sets  in $\mathbb{H}$ containing a set  of  positive non--degenerate  tight infinite--dimensional Gaussian measure defines a separating class (see Theorem 4.1 in  \cite{Cuestalbertos}).  The separating class is then  indexed by the functions in the Reproducing Kernel Hilber Space (RKHS) of such an infinite--dimensional Gaussian measure, when the marginals of the AR$\mathbb{H}$(1) process satisfy Carleman condition. In particular, the index set of the separating class can be the functions in the   RKHS generated by the autocovariance operator of our AR$\mathbb{H}$(1) process (see Theorem 1.2.1 in \cite{DaPrato2002}).

The Central Limit Theorem  of the present paper  provides the asymptotic infinite--dimensional Gaussian  probability distribution of the marginals
 of the generalized functional residual marked empirical process,    under simple and composite null hypothesis.  Under both null hypothesis scenarios, the limiting  process is  identified  in law with time--changed $\mathbb{H}$--valued Brownian motion  via the derived
 Functional Central Limit Theorem (see  Theorem 2 in  \cite{Walk77}). Continuous Mapping Theorem then allows the asymptotic approximation  of the critical values of our infinite--dimensional version of Kolmogorov–Smirnov (K–S) test.  In practice, random projection  methodology can  be applied, and conditionally on a random direction, the weak convergence to real--valued  time--changed Brownian motion holds. The critical values are then asymptotically approximated  from the boundary crossing probabilities of  real--valued   Brownian motion on $[0,1].$  Our application of  random projection methodology leads to  an asymptotically distribution free (ADF) test  statistics.
 The consistency of the test is proved against  a broad class of alternatives, in the infinite--dimensional and conditionally one-dimensional formulations. Note that in the case of misspecified autocorrelation operator, the strong consistency results obtained in Chapter 8  in  \cite{Bosq2000}
 on projection estimation of the  autocorrelation operator  play a key role, in our derivation of the asymptotic analysis of our test statistics under composite  null hypothesis.

The finite--sample  properties of the test statistics are illustrated in the simulation study undertaken in the Appendix, under simple and composite null hypothesis, considering   two different alternative hypothesis scenarios. Specifically, we address the issues of detecting  nonlinearities in the functional covariates, and the problem of   the discrimination  between  two families of linear autocorrelation operators, in the context of invariant spherical functional time series (SP$\mathbb{H}$AR(1) processes). Under this second scenario, the  eigenfunctions of the autocovariance operator coincide with the eigenfunctions of the Laplace Beltrami operator in the sphere.
For relatively small functional sample sizes,  robust empirical  test sizes, and  good  empirical power  properties are observed  under simple $H_{0}.$   The impact of the misspecification level of the   autocorrelation operator   is also illustrated  under composite $H_{0}.$ As expected, when the misspecification level is lower (that is the case of known autocovariance eigenfunctions for  invariant SP$\mathbb{H}$AR(1) processes), better small sample size performance is observed  than in  the case of unknown eigenfunctions and eigenvalues.

The outline of the paper is the following. Some preliminary definitions and results  are provided in Section  \ref{sec1}. In particular, the formulation of the GoF testing procedure, and its implementation in practice are given in Section \ref{test}.
  The asymptotic distribution of the marginals, and the limiting process of the generalized functional residual marked empirical process are derived in Section \ref{AP}.
Consistency of the GoF test is addressed in  Section \ref{s4}.  The  misspecified autocorrelation operator scenario is analyzed  in Section \ref{s6}.
Final discussion is presented in Section \ref{fc}. Auxiliary information is  provided in the Appendix. Illustration of the finite sample performance of the proposed testing procedure is  provided in the simulation study undertaken in this Appendix.

\section{Preliminaries}\label{sec1}

Let $Y:=\{Y_{t},\ t\in \mathbb{Z}\}$  be a zero--mean  AR$\mathbb{H}$(1)  process  on the basic probability space $(\Omega,\mathcal{A},P)$  satisfying the state equation
\begin{equation}Y_{t}=\Gamma (Y_{t-1})+\varepsilon_{t},\quad t\in \mathbb{Z}.\label{eq1}\end{equation}
\noindent The autocorrelation operator  $\Gamma $ of $Y$ is assumed to be in the space  $\mathcal{L}(\mathbb{H})$  of bounded linear operators on $\mathbb{H}.$ The conditions ensuring the  existence of a unique stationary solution to equation (\ref{eq1}) are assumed (see, e.g.,  Chapter 3 in  \cite{Bosq2000}). In particular, we consider that $\left\|\Gamma \right\|_{\mathcal{L}(\mathbb{H})}<1.$
   The marginals of $Y$  are  in the space
 $\mathcal{L}^{2}_{\mathbb{H}}(\Omega,\mathcal{A},P)$ of zero--mean $\mathbb{H}$--valued random variables $X$  with $E\|X\|_{\mathbb{H}}^{2}<\infty.$ The autocovariance operator
   $C_{0}^{Y}=\mathbb{E}[Y_{0}\otimes Y_{0}]=\mathbb{E}[Y_{t}\otimes Y_{t}],$
$t\in \mathbb{Z},$  and the cross covariance operator
$C_{1}^{Y}=\mathbb{E}\left[Y_{0}\otimes Y_{1}\right]=  \mathbb{E}\left[Y_{t}\otimes Y_{t+1}\right],$ $t\in \mathbb{Z},$  are then nuclear operators.

The  innovation process $\varepsilon:=\{\varepsilon_{t},\ t\in \mathbb{Z}\}$   is assumed to be   $\mathbb{H}$--valued Strong White Noise ($\mathbb{H}$--SWN) (see Definition 3.1 in \cite{Bosq2000}). Thus,  $\varepsilon$ has  independent and identically distributed zero--mean  $\mathbb{H}$--valued random components,  with $C_{0}^{\varepsilon}:=\mathbb{E}\left[\varepsilon_{t} \otimes \varepsilon_{t}\right]=\mathbb{E}\left[\varepsilon_{0} \otimes \varepsilon_{0}\right],$ for all $t\in \mathbb{Z},$ having functional variance
$\mathbb{E}[\|\varepsilon_{t}\|^{2}_{\mathbb{H}}]=\mathbb{E}[\|\varepsilon_{0}\|^{2}_{
	\mathbb{H}}]=
	\sigma^{2}_{\varepsilon},$ which provides  the trace norm $\|C_{0}^{\varepsilon}\|_{L^{1}(\mathbb{H})}$ of the autocovariance operator $C_{0}^{\varepsilon}$ of $\varepsilon.$

The following assumption establishes the AR$\mathbb{H}$(1) setting,  where the results of the present paper are derived,  and where, in particular,  Carleman condition, $m_{n}:=\int \|x\|^{n}dP(x)<\infty ,$  $n\geq 1,$  and $\sum_{n\geq 1}m_{n}^{-1/n}=\infty,$ is satisfied by the respective marginals  of $Y$ and $\varepsilon,$  i.e., for $P= P_{Y_{0}}$  and $P=P_{\varepsilon_{1}}.$

\medskip

\noindent \textbf{Assumption A1}.  $Y$ is a  strictly stationary  AR$\mathbb{H}$(1) process, having $\mathbb{H}$--SWN innovation $\varepsilon $ such that  $\mathbb{E}\|\varepsilon_{1}\|^{4}_{\mathbb{H}}<\infty,$  with   marginals  $P_{Y_{0}}$  and $P_{\varepsilon_{1}}$  satisfying Carleman condition.

\medskip

Under model (\ref{eq1}), keeping in mind  Assumption A1, the following orthogonality condition holds \begin{equation}\mathbb{E}\left[(Y_{i}-\Gamma (Y_{i-1}))|Y_{i-1}\right]=\mathbb{E}[\varepsilon_{1}|Y_{0}]\underset{\mbox{a.s.}}{=}0\label{oc}\end{equation} \noindent  for $i\geq 1,$ with $\underset{\mbox{a.s.}}{=}$   denoting  the almost surely (a.s.)  equality.

\subsection{Separating class in the Borel $\sigma$--algebra $\mathcal{B}(\mathbb{H})$ of $\mathbb{H}$}

\noindent For a probability measure  $\mu $ in $\mathbb{H},$ its characteristic function   is given by  (see, e.g., Section 1.2.1 in \cite{DaPrato2002})
$$\phi_{\mu }(x) = \int_{\mathbb{H}}\exp\left( i\left\langle x,y\right\rangle_{\mathbb{H}}\right)\mu (dy),\quad x\in \mathbb{H}.$$

Let $P_{Y_{0}}$ be  the marginal probability measure induced by $Y_{0},$ and, for any Borel probability measure  $\nu $ in $\mathbb{H},$
consider  $$\mathcal{E}(P_{Y_{0}}, \nu):= \left\{ x\in \mathbb{H}:\ \phi_{P_{Y_{0}}}(tx)=\phi_{\nu }(tx),\ \forall t\in \mathbb{R}
\right\}.$$

\noindent In what follows,  $\mu_{Q} $ denotes  a centered non--degenerate Gaussian  measure  in $\mathbb{H}$ with positive trace autocovariance operator $Q.$
\begin{lemma}
\label{lem1}
Under  Assumption A1,  if  $\mu_{Q} \left(\mathcal{E}(P_{Y_{0}},\nu)\right)>0,$ then, $P_{Y_{0}}=\nu.$
\end{lemma}

\noindent Lemma \ref{lem1} follows straightforward from  Theorem 4.1 in  \cite{Cuestalbertos}.

\begin{remark}
\label{rsc}
From Lemma \ref{lem1}, for any Borel measure $\nu $ in $\mathbb{H},$ if $\nu(B)=P_{Y_{0}}(B),$ for
certain $B\in \mathcal{B}(\mathbb{H}),$ such that $\mu_{Q} (B)>0,$ then $P_{Y_{0}}=\nu,$  with $\mathcal{B}(\mathbb{H})$  denoting the Borel $\sigma$--algebra of $\mathbb{H}.$  Then, one can conclude that any
 class $\mathcal{V}(\mathbb{H})$ of Borel sets of $\mathbb{H}$ containing a set  of  positive $\mu_{Q}$--measure defines a separating class for the marginals of $Y.$
\end{remark}

In the introduction of a separating class of $\mathcal{B}(\mathbb{H}),$
  we consider the sets $E(x):=\prod_{j=1}^{\infty }(-\infty , x(\phi_{j})],$ $x\in \mathbb{H},$ for a given orthonormal basis $\left\{ \phi_{j},\ j\geq 1\right\}$ of $\mathbb{H},$ with $x(\phi_{j})=\left\langle  x,\phi_{j}\right\rangle_{\mathbb{H}},$
$j\geq 1.$ Note that \begin{eqnarray}P_{Y_{i}}\left( \prod_{j=1}^{\infty }(-\infty , x(\phi_{j})]\right)&:= &P\left(\{\omega \in \Omega ; \ \left\langle Y_{i}(\omega ),\phi_{j}\right\rangle_{\mathbb{H}}\leq \left\langle  x,\phi_{j}\right\rangle_{\mathbb{H}},\ j\geq 1\}\right)\nonumber\\
&= &\mathbb{E}\left[\ind{Y_{i}\in\prod_{j=1}^{\infty }(-\infty , \ x(\phi_{j})]}\right]=P_{Y_{0}}(E(x)),\quad  x\in \mathbb{H},\ i\geq 0,\end{eqnarray}\noindent   under Assumption A1.    In the next lemma,  the family of indicator functions $\mathcal{F}_{k-1}=\left\{\ind{Y_{k-1}\in\prod_{j=1}^{\infty }(-\infty , \ x(\phi_{j})]},\ x \in  Q^{1/2}(\mathbb{H})\right\}$ will play a crucial role in the characterization of the orthogonality condition (\ref{oc}) for each $k\geq 1,$ where
  $Q^{1/2}(\mathbb{H})$ denotes the RKHS generated by the kernel of a positive trace operator $Q.$ Lemma 1(d) in   \cite{Escanciano06}  is now  reformulated in our functional  context.

 \begin{lemma}
 \label{lem2}
 Under Assumption A1,  for each $k\geq 1,$
 \begin{eqnarray}
&& \hspace*{-0.75cm} \mathbb{E}[\varepsilon_{k}|Y_{k-1}]\underset{\mbox{a.s.}}{=}0 \ \mbox{if and only if} \
\mathbb{E}\left[\varepsilon_{k}\ind{Y_{k-1}\in E(x)}\right]
	=0,  \ \mbox{a.e.}  \ \mbox{in} \ x \in  Q^{1/2}(\mathbb{H}),\nonumber \\\label{occ2}
\end{eqnarray}
\noindent where $\ind{E^{k-1}(x)}:=\ind{Y_{k-1}\in E(x)}=\ind{Y_{k-1}\in\prod_{j=1}^{\infty }(-\infty , \ x(\phi_{j})]}$  is given here   in terms of the orthonormal basis $\left\{ \phi_{j},\ j\geq 1\right\}$
of  \    $\mathbb{H}$  satisfying
  $Q(\phi_{j})=\lambda_{j}(Q)\phi_{j},$ $j\geq 1,$ for a given  positive trace operator $Q$   on $\mathbb{H}.$
  \end{lemma}

\begin{proof}
Let $\widetilde{\mu}_{Q}=\prod_{j=1}^{\infty}\widetilde{\mu}_{0,\lambda_{j}(Q)}$ be the infinite product Gaussian measure on $(\mathbb{R}^{\infty},\mathcal{B}(\mathbb{R}^{\infty})),$ identified in $l^{2}$ sense with an infinite--dimensional  Gaussian measure $\mu_{Q}$ on $(\mathbb{H},\mathcal{B}(\mathbb{H}))$ with positive trace autocovariance operator $Q,$ satisfying $Q(\phi_{j})=\lambda_{j}(Q)\phi_{j},$
 $j\geq 1$
(see Theorem 1.2.1 in \cite{DaPrato2002}). Here, for each $j\geq 1,$ $\widetilde{\mu}_{0,\lambda_{j}(Q)}$ denotes a one--dimensional Gaussian measure with zero mean and variance $\lambda_{j}(Q).$
  For $x\in  Q^{1/2}(\mathbb{H}),$ $\sum_{j=1}^{\infty}\left(1-\Phi \left(\frac{\left\langle  x,\phi_{j}\right\rangle_{\mathbb{H}},}{\lambda_{j}(Q)}\right)\right)<\infty.$
  Hence,   $\prod_{j=1}^{\infty }\Phi \left(\frac{\left\langle  x,\phi_{j}\right\rangle_{\mathbb{H}}}{\lambda_{j}(Q)}\right)>0,$  where $\Phi $ denotes the standard normal cumulative distribution function. Thus,
  \begin{eqnarray}&&\hspace*{-1cm} \mu_{Q}\left(E(x)\right)=\widetilde{\mu}_{Q}\left( \prod_{j=1}^{\infty}(-\infty, x(\phi_{j})]\right)=\prod_{j=1}^{\infty }\Phi \left(\frac{\left\langle  x,\phi_{j}\right\rangle_{\mathbb{H}}}{\lambda_{j}(Q)}\right)>0,\label{pm}\end{eqnarray}
 \noindent with, as before,   $x(\phi_{j})=\left\langle  x,\phi_{j}\right\rangle_{\mathbb{H}},$ $j\geq 1.$
 From Lemma  \ref{lem1},  Remark  \ref{rsc} and equation  (\ref{pm}),  $\{ E(x),\ x\in Q^{1/2}(\mathbb{H})\}$  defines a separating class of Borel sets of $\mathbb{H}. $
 This separating condition is sufficient for  the class of functions $\mathcal{F}_{k-1}=\left\{\ind{ E^{k-1}(x)},\ x\in Q^{1/2}(\mathbb{H})\right\}$ to satisfy  (\ref{occ2}).
\end{proof}

 \medskip

For each $x\in \mathbb{H},$ the next lemma provides   the martingale difference  property of the  $\mathbb{H}$--valued    sequence $X(x):=\{X_{i}(x),\ i\geq 1\}=\left\{\varepsilon_{i}\ind{E^{i-1}(x)}),\ i\geq 1\right\}.$

\begin{lemma}
	\label{lem1a} For each $x\in \mathbb{H},$ the sequence
	$$X(x)=\{X_{i}(x),\ i\geq 1\}=\{(Y_{i}-\Gamma (Y_{i-1}))\ind{E^{(i-1)}(x)},\ i\geq 1\}$$  \noindent  is an $\mathbb{H}$--valued martingale difference  with respect to  the filtration  $\mathcal{M}_{0}^{Y}\subset \mathcal{M}_{1}^{Y}\dots \subset \mathcal{M}_{n}^{Y}\subset \dots,$  where    \linebreak $\mathcal{M}_{i-1}^{Y}=\sigma (Y_{t},\ t\leq i-1),$
	for $ i\geq 1.$
	
\end{lemma}
\begin{proof}
For each $i\geq 1,$ and $x\in\mathbb{H},$
	\begin{eqnarray}&&\hspace{-1cm}\mathbb{E}\left[X_{i}(x)|\mathcal{M}_{i-1}^{Y}\right]:= \mathbb{E}^{\mathcal{M}_{i-1}^{Y}}\left[X_{i}(x)\right]=
		\mathbb{E}^{\mathcal{M}_{i-1}^{Y}}\left[\left(Y_{i}-\Gamma (Y_{i-1})\right)\ind{ \omega \in \Omega;\ \left\langle Y_{i-1}(\omega ),\phi_{j}\right\rangle_{\mathbb{H}}\leq \left\langle  x,\phi_{j}\right\rangle_{\mathbb{H}},\ j\geq 1}\right]\nonumber\\
		&&=\ind{ \omega \in \Omega;\ \left\langle Y_{i-1}(\omega ),\phi_{j}\right\rangle_{\mathbb{H}}\leq \left\langle  x,\phi_{j}\right\rangle_{\mathbb{H}},\ j\geq 1 }\mathbb{E}^{\mathcal{M}_{i-1}^{Y}}\left[Y_{i}-\Gamma (Y_{i-1})\right]
	%	\nonumber\\
		= \ind{ \omega \in \Omega;\ \left\langle Y_{i-1}(\omega ),\phi_{j}\right\rangle_{\mathbb{H}}\leq \left\langle  x,\phi_{j}\right\rangle_{\mathbb{H}},\ j\geq 1}
		\mathbb{E}^{\mathcal{M}_{i-1}^{Y}}\left[\varepsilon_{i}\right]=0,\ \mbox{a.s.}\  \ i\geq 1,\nonumber
	\end{eqnarray}
	\noindent in view of the definition of the innovation process $\varepsilon $ as $\varepsilon_{n}= Y_{n}-\Pi^{\mathcal{M}_{n-1}}(Y_{n})=Y_{n}-\Gamma (Y_{n-1}),$ for every $n\geq 1,$ where
	$\Pi^{\mathcal{M}_{n-1}}$ is the orthogonal projector into $\mathcal{M}_{n-1}$  (see equation (2.55) and  (3.4) in  \cite{Bosq2000}).
\end{proof}

\subsection{Testing the autocorrelation model}
\label{test}
For a given   $\Gamma_{0}\in \mathcal{L}(\mathbb{H}),$  consider the testing problem:
\begin{eqnarray}H_{0}:\Gamma &=&\Gamma_{0},\nonumber\\
	H_{1}:\Gamma &\neq &\Gamma_{0}.\nonumber
\end{eqnarray}
Our formulation of K–S test is based on Lemma \ref{lem2}, and the
  $\mathbb{H}$--valued empirical process $V_{n}:=\left\{ V_{n}(x),\  x\in \mathbb{H}\right\}$
\begin{eqnarray}V_{n}(x)&=&\frac{1}{\sqrt{n}}\sum_{i=1}^{n}X_{i}(x)=
	\frac{1}{\sqrt{n}}\sum_{i=1}^{n}(Y_{i}-\Gamma_{0} (Y_{i-1}))\ind{E^{i-1}(x)}=\frac{1}{\sqrt{n}}\sum_{i=1}^{n}\varepsilon_{i}(\Gamma_{0})\ind{E^{i-1}(x)},\quad x\in \mathbb{H}.\label{efp}
\end{eqnarray}

Theorem \ref{th3}  below establishes the  weak convergence of $V_{n}$ to  an $\mathbb{H}$--valued  Gaussian process $ W_{\infty}:=\left\{ W_{\infty }(x),\ x\in \mathbb{H}\right\}$ with
 covariance operator $C_{\min(x,y)}^{W_{\infty}}$ defined,  for every  $x,y\in \mathbb{H},$ as
     \begin{equation}\hspace*{-0.5cm}C_{\min(x,y)}^{W_{\infty}}:= \mathbb{E}\left[ W_{\infty}(x)\otimes W_{\infty}(y)\right]=C_{0}^{\varepsilon}P_{Y_{0}}\left[\prod_{j=1}^{\infty }\left(-\infty ,\min\{ x(\phi_{j}),y(\phi_{j})\}\right]\right].\label{cov2}\end{equation}

  \noindent Here,
 \begin{eqnarray} P_{Y_{0}}\left[\prod_{j=1}^{\infty }(-\infty ,\min\{ x(\phi_{j}),y(\phi_{j})\}]\right]  &:=&P\left( \omega \in \Omega ; \ \left\langle Y_{0}(\omega ),\phi_{j}\right\rangle_{\mathbb{H}}\leq \min\left(\left\langle  x,\phi_{j}\right\rangle_{\mathbb{H}},\left\langle  y,\phi_{j}\right\rangle_{\mathbb{H}}\right),\ j\geq 1\right)
 \nonumber\\ \hspace*{1cm} &=&
 \mathbb{E}\left[\ind{Y_{0}\in \prod_{j=1}^{\infty}\left(-\infty, \min\{x(\phi_{j}),y(\phi_{j})\}\right]}\right].
 \label{eqmin}\end{eqnarray}

From equations (\ref{cov2})--(\ref{eqmin}), process $W_{\infty }$ can be identified in law  with time--changed $\mathbb{H}$--valued Wiener process with subordinator  \begin{equation}\left\{P_{Y_{0}}(x):=P_{Y_{0}}\left[E(x)\right]=P_{Y_{0}}\left[\prod_{j=1}^{\infty }(-\infty , x(\phi_{j})]\right],\ x\in \mathbb{H}\right\}.\label{eqsub}
\end{equation}
\noindent  That is, $W_{\infty }$ can be generated from $\mathbb{H}$--valued  Wiener process on $[0,1],$ by inverse subordination
via $P_{Y_{0}}^{-1},$ with

\begin{eqnarray} P_{Y_{0}}^{-1}(t):= x_{0}\ \mbox{such that} \   \mu_{C_{0}^{Y}}\left( \prod_{j=1}^{\infty }[x_{0}(\phi_{j}),x(\phi_{j})]\right)& > & 0,\nonumber\\
 \forall x\in [C_{0}^{Y}]^{1/2}(\mathbb{H}); \ P_{Y_{0}}\left( \prod_{j=1}^{\infty} \left(-\infty, x(\phi_{j})\right]\right) &\geq & t,\ t\in [0,1],\label{eqinf}
\end{eqnarray}

\noindent where $\left\{\phi_{j},\ j\geq 1\right\}$ now denotes the orthonormal basis of $\mathbb{H},$ given by the eigenfunctions of the trace autocovariance operator $C_{0}^{Y}.$ Note that in our $\mathbb{H}$--valued context, infinite--dimensional tight Gaussian measures play the role of the uniform measure in finite--dimensions (see  Theorem 4.1 in  \cite{Cuestalbertos}).

Applying Theorem  \ref{th3}, and Continuous Mapping Theorem, we arrive at the following asymptotic approximation of K–S test in infinite dimensions:
\begin{equation}\sup_{t\in [0,1]} \left\|V_{n}(P_{Y_{0}}^{-1}(t))\right\|_{\mathbb{H}}\underset{D}{\simeq }\sup_{x\in \mathbb{H}}\left\|W_{C_{0}^{\varepsilon}}\circ P_{Y_{0}}(x)\right\|_{\mathbb{H}}=\sup_{t\in [0,1]}\left\|W_{C_{0}^{\varepsilon}}(t)\right\|_{\mathbb{H}},\label{cmthbb}
\end{equation}
\noindent  as $n\to \infty,$ where $W_{C_{0}^{\varepsilon}}=\left\{ W_{C_{0}^{\varepsilon}}(t),\  t\in [0,1]\right\}$ denotes an $\mathbb{H}$--valued Wiener process on the interval  $[0,1],$ with autocovariance operator $C_{0}^{\varepsilon},$ in the sense introduced in Definition 2 of \cite{Dedecker}. This process induces  a probability measure on $C_{\mathbb{H}}([0,1]),$   the separable  Banach space,  under the supremum norm $\|g\|_{\infty}=\sup_{t\in [0,1]}\|g(t)\|_{\mathbb{H}},$ of $\mathbb{H}$--valued continuous functions on $[0,1]$ with respect to $\mathbb{H}$ norm. From (\ref{cmthbb}),
the critical values of the test based on $\sup_{t\in [0,1]} \left\|V_{n}(P_{Y_{0}}^{-1}(t))\right\|_{\mathbb{H}}$  can be approximated from the boundary crossing probabilities of  $W_{C_{0}^{\varepsilon}}.$

  The implementation in practice of this testing criterion is complicated. Particularly, we are interested on  arriving at an ADF test, which will be now introduced via random projection methodology. Then, we consider the test statistics
  \begin{eqnarray}&&\mathcal{T}(\mathbf{h})= \sup_{t\in [0,1]} \left|s_{n}^{-1}(\mathbf{h})\left\langle V_{n}(P_{Y_{0}}^{-1}(t)), \mathbf{h}\right\rangle_{\mathbb{H}}\right|= 	 \sup_{t\in [0,1]} \left|\frac{s_{n}^{-1}(\mathbf{h})}{\sqrt{n}}\sum_{i=1}^{n}\left\langle \varepsilon_{i}(\Gamma_{0}),\mathbf{h}\right\rangle_{\mathbb{H}}\ind{E^{i-1}(P_{Y_{0}}^{-1}(t))}\right|,
\label{statistic}
\end{eqnarray}
  \noindent conditionally to  the functional value  $\mathbf{h}\in \mathbb{H}$ of a Gaussian distributed functional random  variable
  inducing measure $\mu_{C_0^{\epsilon}}.$  Here,
\begin{eqnarray}
	s_{n}^{-1}(\mathbf{h})&=&\left(\frac{1}{n}\sum_{i=1}^{n}\left[\left\langle (Y_{i}-\Gamma_{0} (Y_{i-1})),\mathbf{h}\right\rangle_{\mathbb{H}} \right]^{2} \right)^{-1/2}= \left(\frac{1}{n}\sum_{i=1}^{n}\left[\left\langle \varepsilon_{i}(\Gamma_{0}), \mathbf{h}\right\rangle_{\mathbb{H}}\right]^{2}\right)^{-1/2}.
	\label{rpci}
\end{eqnarray}

\noindent Note that, from Theorem  \ref{th3} below,  under $H_{0},$  $$Z_{\mathbf{h}}=\left\{ Z_{\mathbf{h}}(t),\  t\in [0,1]\right\}=\left\{\frac{s_{n}^{-1}(\mathbf{h})}{\sqrt{n}}\sum_{i=1}^{n}\left\langle \varepsilon_{i}(\Gamma_{0}),\mathbf{h}\right\rangle_{\mathbb{H}}\ind{E^{i-1}(P_{Y_{0}}^{-1}(t))},\ t\in [0,1]\right\}$$
\noindent conditionally converges in law to a Gaussian process
on the interval $[0,1],$ with covariance function,  given by,  for   $t,s\in [0,1],$

\begin{eqnarray} C_{Z_{\mathbf{h}}}(t,s)&=& \frac{C_{0}^{\varepsilon}(\mathbf{h})(\mathbf{h})}{C_{0}^{\varepsilon}(\mathbf{h})(\mathbf{h})}
P_{Y_{0}}\left[ \prod_{j=1}^{\infty }\left(-\infty , \min\left\{[P_{Y_{0}}^{-1}(t)](\phi_{j}),[P_{Y_{0}}^{-1}(s)](\phi_{j})\right\}\right]\right]\nonumber\\
&=&P_{Y_{0}}\left[ \prod_{j=1}^{\infty }\left(-\infty , \min\left\{[P_{Y_{0}}^{-1}(t)](\phi_{j}),[P_{Y_{0}}^{-1}(s)](\phi_{j})\right\}\right]\right].\nonumber\end{eqnarray}

\noindent This process can be  identified in law with  real--valued time--changed Brownian motion with inverse subordinator $P_{Y_{0}}^{-1}.$ Thus,
as $n\to \infty,$
\begin{equation}\sup_{t\in [0,1]} \left|s_{n}^{-1}(\mathbf{h})\left\langle V_{n}(P_{Y_{0}}^{-1}(t)), \mathbf{h}\right\rangle_{\mathbb{H}}\right| \underset{D}{\simeq }\sup_{t\in [0,1]}\left|W(t)\right|,\label{cmth}
\end{equation}
\noindent where $W$ denotes real--valued Brownian motion on $[0,1].$ By Reflection Principle,   the identity $P\left[\sup_{t\in [0,1]}W(t)\leq a\right]=2\Phi(a)-1$ then allows the approximation of  the critical values of this random projected based test statistics.  As before, $\Phi$ denotes the  standard normal cumulative probability distribution function.
	
Under conditions of  Lemmas  \ref{lem1}--\ref{lem2},   since, under  Assumption A1,   the  marginals of $\varepsilon $ also  satisfy Carleman condition, the procedure can be summarized as  follows:  The null hypothesis $H_{0}$
holds if and only if, for $\mathbf{h}\in B,$ with $B\in \mathcal{B}(\mathbb{H})$ such that  $\mu_{C_{0}^{\varepsilon}}(B)>0,$ $\mathcal{T}(\mathbf{h})$ does not exceed a  critical value of the probability distribution of the supremum norm of   Brownian motion on  $[0,1].$  Otherwise,
$\mathcal{T}(\mathbf{h})$  exceeds a  critical value of the probability distribution of the supremum norm of   Brownian motion  on  $[0,1]$ $\mu_{C_{0}^{\varepsilon}}$--a.s. in $\mathbf{h}\in  \mathbb{H},$ which is equivalent to
 $H_{0}$ fails with probability one in view of Theorem 4.1 in  \cite{Cuestalbertos}.

\begin{remark}
In practice,  $\mathbf{h}$ can be generated from an infinite--dimensional  Gaussian measure $\mu_{C_{0}^{\varepsilon}}.$ Thus,  $\mathbf{h}$ can be   generated from the identity
\begin{equation}
\mathbf{h}=\sum_{j\geq 1}\sqrt{\lambda_{j}(C_{0}^{\varepsilon})}\epsilon_{j}\widetilde{\phi}_{j},\label{KLE}
\end{equation}
\noindent where  $C_{0}^{\varepsilon}(\widetilde{\phi}_{j})=\lambda_{j}(C_{0}^{\varepsilon})\widetilde{\phi}_{j},$ $j\geq 1,$ and $\{\epsilon_{j},\  j\geq 1\}$ is a sequence of independent and identically distributed standard Gaussian random variables.

Note  that equation (\ref{KLE}) can be approximated in a consistent way from the results derived in  Chapters 4 and 8 of \cite{Bosq2000}.  Given the $\mathbb{H}$--SWN property of $\varepsilon,$  alternative strongly consistent approximations of the eigenvalues  of $C_{0}^{\varepsilon}$ follow from the strong  law of large numbers. Specifically,  for the case where    $\left\{\widetilde{\phi}_{j},\ j\geq 1\right\}$ is  known, one can consider

 \begin{eqnarray}&&
	\widehat{\lambda}_{j}(C_{0}^{\varepsilon})=\frac{1}{n}\sum_{i=1}^{n}\left[ (Y_{i}-\Gamma (Y_{i-1}))(\widetilde{\phi}_{j})\right]^{2} = \frac{1}{n}\sum_{i=1}^{n}[\varepsilon_{i}(\widetilde{\phi}_{j})]^{2}.
%	\label{rpci}
\end{eqnarray}
While,  in the case of  $\left\{\widetilde{\phi}_{j},\ j\geq 1\right\}$ being unknown, the empirical complete orthonormal eigenfunction system $\left\{\widetilde{\phi}_{j,n},\ j\geq 1\right\},$ and the corresponding empirical eigenvalues $\left\{\lambda_{j,n}(C_{0}^{\varepsilon}),\ j=1,\dots ,n\right\}$   of $C_{0}^{\varepsilon}$ can be considered.  They satisfy the following identity $\frac{1}{n}\sum_{i=1}^{n}\varepsilon_{i}\otimes \varepsilon_{i}(\widetilde{\phi}_{j,n})=
\lambda_{j,n}(C_{0}^{\varepsilon})\widetilde{\phi}_{j,n},$ $j=1,\dots,n,$ with $\lambda_{n+k,n}(C_{0}^{\varepsilon})=0,$ for $k\geq 1.$
\end{remark}

\section{Asymptotic probability distribution  under simple $H_0$}
\label{AP}
The main asymptotic results are derived in this section. Specifically, Theorem \ref{th1} in Section \ref{s2} provides, for each $x\in \mathbb{H},$  the convergence in distribution of the marginal $V_{n}(x)$ to $V_{\infty }(x)\sim \mathcal{N}(0,C_{0}^{\varepsilon}P_{Y_{0}}(E(x))).$ The random variable $V_{\infty }(x)$ is  distributed as a  zero--mean $\mathbb{H}$--valued Gaussian random variable  with autocovariance operator $C_{0}^{\varepsilon}P_{Y_{0}}(E(x)).$  The weak convergence of  $V_{n}$   to time--changed $\mathbb{H}$--valued  Wiener process $W_{C_{0}^{\varepsilon}}\circ P_{Y_{0}}=\left\{W_{C_{0}^{\varepsilon}}\circ P_{Y_{0}}(x),\ x\in \mathbb{H}\right\},$ with subordinator   $P_{Y_{0}}(x)$ introduced in equation (\ref{eqsub}), and covariance operator given in equation (\ref{cov2}), is proved in
Theorem \ref{th3} in Section \ref{s3}.

Through this section we omit the dependence of the functional marks $\{\varepsilon_{j}\}$  on the totally specified autocorrelation operator $\Gamma_{0}$ under $H_{0}.$

\medskip

The next assumption is of technical nature, and  plays a crucial role in the derivation of the results of this paper.

\medskip

\noindent \textbf{Assumption A2}.  Assume  that $Y_{0}$ is independent of $ \varepsilon_{i},$ for all  $i\geq 1.$

\begin{remark}
The following AR$\mathbb{H}$(1) formula motivates the introduction of Assumption A2 above, for the derivation of Theorems \ref{th1} and \ref{th3} below (see  (3.11) in \cite{Bosq2000})
\begin{eqnarray}
	Y_{i-1}=\sum_{t=0}^{i-2}\Gamma^{t}(\varepsilon_{i-1-t})+\Gamma^{i-1}(Y_{0}),\quad i\geq 1.
	\label{eq311b2000}
\end{eqnarray}
\end{remark}

\subsection{Central Limit Theorem}
\label{s2} Theorem \ref{th1} is now derived by applying  Theorem 2.16  in \cite{Bosq2000} to $\left\{ X_{i}(x),\ i\geq 1 \right\}$ in Lemma \ref{lem1a}, for each $x\in \mathbb{H}.$

\begin{theorem}\label{th1} Let $Y$ be a   centered  \mbox{\emph{AR}}$\mathbb{H}$\mbox{\emph{(1)}} process, satisfying
	Assumptions A1--A2. Then,
	for each $x\in \mathbb{H},$  the following convergence holds:  \begin{eqnarray}&&V_{n}(x)\to_{D}V_{\infty }(x)\sim\mathcal{N}(0,C_{x}),\quad n\to \infty,\nonumber \end{eqnarray}
\noindent with $\to_{D}$ denoting the convergence in distribution, and,  for each $x\in \mathbb{H},$ the autocovariance operator $C_{x}$ of the limiting Gaussian random variable $V_{\infty }(x)$ is given by $C_{x}=C_{0}^{\varepsilon}P_{Y_{0}}(E(x))=
	C_{0}^{\varepsilon}P_{Y_{0}}(x).$  Thus, as
	$\left\langle  x,\phi_{j}\right\rangle_{\mathbb{H}}\to \infty,$
	 $j\geq 1,$
	$$V_{n}(x)\to_{D}\mathcal{Z}\sim
	\mathcal{N}(0,C_{0}^{\varepsilon}),\quad n\to \infty.$$	
\end{theorem}
\begin{proof}
The proof of this result consists of the verification of conditions (2.59)--(2.61)  in Theorem 2.16  in \cite{Bosq2000}. Specifically, condition (2.59) follows from
Vitali Convergence  Theorem by proving convergence in probability to zero, and uniform integrability of $\left\{ X_{i}(x),\ i\geq 1 \right\}.$  Condition (2.61) provides tightness, and condition (2.60) requires the application of  (2.36) in  Corollary 2.3 in \cite{Bosq2000}.

Let us first consider  condition (2.59) in Theorem 2.16. For each $x\in \mathbb{H},$ we have to prove
\begin{equation}n^{-1/2}\mathbb{E}\left( \max_{1\leq i\leq n}\left\|X_{i}(x)\right\|_{\mathbb{H}}\right)\to 0,\quad n\to \infty.\label{convmean}
\end{equation}

Applying Chebyshev inequality, $\mathbb{H}$--SWN property of $\varepsilon ,$ and strictly stationarity of $Y$ under Assumption A1,
\begin{eqnarray}&& \hspace{-1cm}P\left(\max_{1\leq i\leq n}\left\|X_{i}(x)\right\|_{\mathbb{H}}>\sqrt{n}\eta \right)\leq
\sum_{i=1}^{n}P\left(\left\|X_{i}(x)\right\|_{\mathbb{H}}>\sqrt{n}\eta \right)\nonumber\\ &&
\leq
\frac{1}{n\eta^{2}}\sum_{i=1}^{n}\Esp{\|X_{i}(x)\|_{\mathbb{H}}^{2}\ind{\|X_{i}(x)\|_{\mathbb{H}}>\sqrt{n}\eta}}
=\frac{1}{\eta^{2}}\Esp{\|X_{1}(x)\|_{\mathbb{H}}^{2}\ind{\|X_{1}(x)\|_{\mathbb{H}}>\sqrt{n}\eta}}.
	\nonumber\\
\label{eqeventa}
\end{eqnarray}

The Dominated Convergence Theorem yields convergence to zero of  $\Esp{\|X_{1}(x)\|_{\mathbb{H}}^{2}\ind{\|X_{1}(x)\|_{\mathbb{H}}>\sqrt{n}\eta}},$ leading, from equation (\ref{eqeventa}), to  \begin{equation}P\left(\max_{1\leq i\leq n}\left\|X_{i}(x)\right\|_{\mathbb{H}}>\sqrt{n}\eta \right)\to 0,\quad n\to \infty.\label{convmean2}
\end{equation}

Under Assumption A1,

\begin{eqnarray}
	&&\Esp{\max_{1\leq i\leq n}\left\|\frac{X_{i}(x)}{\sqrt{n}}\right\|_{\mathbb{H}}^{2}}\leq  \Esp{\left\|X_{1}(x)\right\|^{2}_{\mathbb{H}}}<\infty
\nonumber \\ \label{eqmax}\end{eqnarray}
\noindent ensuring uniform integrability. Thus,   from equations (\ref{convmean2}) and (\ref{eqmax}),
equation (\ref{convmean})  follows from Vitali Convergence  Theorem.

Let now prove, for each $x\in \mathbb{H},$
$$\lim_{N\to \infty} \lim_{n\to \infty}
P\left[\sum_{i=1}^{n}r_{N}^{2}\left(\frac{X_{i}(x)}{\sqrt{n}}\right)>\eta\right]=0,\quad \eta >0,$$
\noindent corresponding to condition  (2.61) in Theorem 2.16 in \cite{Bosq2000},  where
$r_{N}^{2}\left(\frac{X_{i}(x)}{\sqrt{n}}\right)=\sum_{l=N}^{\infty}
		\left\langle \frac{X_{i}(x)}{\sqrt{n}},\phi_{l}\right\rangle^{2}_{\mathbb{H}}.$
Specifically, for any $n,$

\begin{eqnarray}P\lrc{\sum_{i=1}^{n}r_{N}^{2}\lrp{\frac{X_{i}(x)}{\sqrt{n}}}>\eta}&\leq & \sum_{i=1}^{n}P\lrc{r_{N}^{2}\lrp{\frac{X_{i}(x)}{\sqrt{n}}}>\eta}\leq \frac{1}{\eta}\sum_{i=1}^{n}\Esp{r_{N}^{2}\lrp{\frac{X_{i}(x)}{\sqrt{n}}}}\nonumber\\
	&\leq &\frac{1}{n\eta}\sum_{i=1}^{n}\Esp{r_{N}^{2}\lrp{\varepsilon_{i}}}=\frac{1}{\eta}\Esp{r_{N}^{2}\lrp{\varepsilon_{1}}}
=\frac{1}{\eta}\sum_{l=N}^{\infty}\lambda_{l}(C^{\varepsilon}_{0})\to 0,\quad  N\to \infty,
	\label{sc}\end{eqnarray}

\noindent in view of the trace property of $C^{\varepsilon}_{0}.$

Finally, we prove
\begin{equation}\frac{1}{n}\sum_{1\leq i\leq n}\left\langle X_{i}(x),\phi_{l}\right\rangle_{\mathbb{H}}\left\langle X_{i}(x),\phi_{k}\right\rangle_{\mathbb{H}}
\to_{\mbox{a.s.}} \left\langle C_{x}(\phi_{l}),\phi_{k}\right\rangle_{\mathbb{H}},\quad n\to \infty,\ l,k\geq 1,\label{260}
\end{equation}
\noindent corresponding to  condition (2.60)  in Theorem 2.16 in \cite{Bosq2000}. It is sufficient to prove condition (2.36) in  Corollary 2.3 in \cite{Bosq2000}, which is formulated in terms of
$W_{i}(x)=X_{n+i}(x)\otimes X_{n+i}(x)-C_{0}^{\varepsilon}P_{Y_{0}}\left( E(x)\right),$  $i\in \{0,\dots,p-1\}$. Specifically, under  strictly stationarity of $Y,$ and  $\mathbb{H}$--SWN property of $\varepsilon ,$  we obtain

\begin{eqnarray}
	\Esp{\left\|W_{0}(x)+\dots +W_{p-1}(x)\right\|^{2}_{\mathcal{S}(\mathbb{H})}} &=& \sum_{i,k=0}^{p-1}\Esp{  \left\langle X_{n+i}(x)\otimes X_{n+i}(x),X_{n+k}(x)\otimes X_{n+k}(x)\right\rangle_{\mathcal{S}(\mathbb{H})}}\nonumber \\
	&&\quad -\sum_{i,k=0}^{p-1}\left[P_{Y_{0}}\left( E(x)\right)\right]^{2}\|C_{0}^{\varepsilon}\|_{\mathcal{S}(\mathbb{H})}^{2}\leq  \sum_{i,k=0}^{p-1}\Esp{\inprod{X_{n+i}(x)}{X_{n+k}(x)}^{2}}
	\nonumber  \\ 	
			 &=& p\lrc{\sum_{\substack{u=-(p-1) \\ u\neq 0}}^{p-1}\lrp{1-\frac{|u|}{p}}\Esp{\ind{E^{0}(x)}\ind{E^{u}(x)}
			\sum_{h,l\geq 1}\varepsilon_{1}(\phi_{h})\varepsilon_{1}(\phi_{l})\varepsilon_{u+1}(\phi_{h})
			\varepsilon_{u+1}(\phi_{l})}}\nonumber \\
	&&\quad +p\Esp{\|\varepsilon_{1}\|^{4}_{\mathbb{H}}}P_{Y_{0}}\left( E(x)\right)\leq p\left[\left\|C_{0}^{\varepsilon}\right\|_{\mathcal{S}(\mathbb{H})}^{2}+\Esp{\|\varepsilon_{1}
		\|^{4}_{\mathbb{H}}}\right]<\infty,	 \end{eqnarray}

\noindent where   $\Esp{\|\varepsilon_{1}\|^{4}_{\mathbb{H}}}<\infty$ under
 Assumption A1, and $\left\|C_{0}^{\varepsilon}\right\|_{\mathcal{S}(\mathbb{H})}^{2}<\infty $ under the  trace property of
$C_{0}^{\varepsilon}.$  From  Corollary 2.3 in  \cite{Bosq2000},  with $\gamma =1,$  for all $\beta >1/2,$

$$\frac{n^{1/4}}{(\log(n))^{\beta }}\left\|\frac{S_{n}^{W(x)}}{n}\right\|_{\mathcal{S}(\mathbb{H})}=\frac{n^{1/4}}{(\log(n))^{\beta }}\left\|\sum_{i=1}^{n}\frac{ W_{i}(x)}{n}\right\|_{\mathcal{S}(\mathbb{H})}\to_{\textrm{a.s.}} 0,\ n\to \infty,$$ \noindent   for each  $x\in \mathbb{H},$  and (\ref{260}) holds.

\end{proof}

\subsection{Functional Central Limit Theorem}
\label{s3}

This section provides the convergence in law of $V_{n}$ to time--changed $\mathbb{H}$--valued Wiener process by applying an invariance principle based on Robbins-Monro procedure
 (see Theorem 2 in  \cite{Walk77},  formulated in
 Lemma \ref{th2} of the Appendix).

\begin{theorem}
	\label{th3}
	Under Assumptions A1--A2,  as $n\to \infty,$ the empirical   process $V_{n}$ weak converges in distribution, in the space $C_{\mathbb{H}}([0,1]),$ to a  generalized $\mathbb{H}$--valued  Gaussian process $W_{\infty }$ with covariance operator
	\begin{equation}C_{\min(x,y)}^{W_{\infty}}= C_{0}^{\varepsilon}P_{Y_{0}}\left[\prod_{j=1}^{\infty }(-\infty ,\min\{ x(\phi_{j}),y(\phi_{j})\}]\right],\quad \forall x,y\in \mathbb{H},
		\label{covop}
		\end{equation}
\noindent for  a given  orthonormal basis $\left\{ \phi_{j},\ j\geq 1\right\}$ of $\mathbb{H}.$
	\end{theorem}
\begin{proof}
The proof of this result consists of the verification that the time--changed sequence of  random elements in $C_{\mathbb{H}}([0,1]),$ constructed
   in (\ref{fclt}) below from $\mathbb{H}$--valued empirical process $V_{n},$ satisfies  conditions (i)--(iii) of  Lemma  \ref{th2} in the  Appendix.  Specifically,  (\ref{zerlim}),  (\ref{ic}) and (\ref{i2}) below  lead to the desired result, since the unitary jumps of $V_{n}$ go to zero in probability from  (\ref{convmean}),  and  tightness holds from  (\ref{sc}). A Cram\'er--Wold device argument is applied to prove the convergence  of all the finite--dimensional distributions of $V_{n}$  to a multivariate infinite--dimensional Gaussian probability distribution with  covariance matrix operator having functional entries   \begin{eqnarray}
	C^{X_{i}(x),X_{k}(y)}_{i,k}&:=&\mathbb{E}\left[ X_{i}(x)\otimes X_{k}(y)\right]=\delta_{i,k}C_{0}^{\varepsilon}P\left( \omega \in \Omega ; \ \left\langle Y_{i-1}(\omega ),\phi_{j}\right\rangle_{\mathbb{H}}\leq \min\left(\left\langle  x,\phi_{j}\right\rangle_{\mathbb{H}},\left\langle  y,\phi_{j}\right\rangle_{\mathbb{H}}\right),\ j\geq 1\right)
	\nonumber\\   &=&\delta_{i,k}C_{0}^{\varepsilon}P\left( \omega \in \Omega ; \ \left\langle Y_{0}(\omega ),\phi_{j}\right\rangle_{\mathbb{H}}\leq \min\left(\left\langle  x,\phi_{j}\right\rangle_{\mathbb{H}},\left\langle  y,\phi_{j}\right\rangle_{\mathbb{H}}\right),\ j\geq 1\right)\nonumber\\   &=&
	\delta_{i,k}C_{0}^{\varepsilon}P_{Y_{0}}\left[\prod_{j=1}^{\infty }(-\infty ,\min\{ x(\phi_{j}),y(\phi_{j})\}]\right],\ i,k\geq 1,
	\label{sttb}
\end{eqnarray} (see Appendix).   Thus, the weak convergence in the space $C_{\mathbb{H}}([0,1])$ of $V_{n}$ to zero--mean time--changed $\mathbb{H}$--valued Wiener process $W_{\infty},$ with
   covariance operator
$$\hspace*{-0.2cm}C_{\min(x,y)}^{W_{\infty}}= \mathbb{E}\left[W_{\infty}(P_{Y_{0}}^{-1}(t))W_{\infty}(P_{Y_{0}}^{-1}(s))\right]=C_{0}^{\varepsilon}P_{Y_{0}}\left[\prod_{j=1}^{\infty }(-\infty ,\min\{ x(\phi_{j}),y(\phi_{j})\}]\right],$$
 \noindent holds, with $x=P_{Y_{0}}^{-1}(t)\in \mathbb{H},$ and $y=P_{Y_{0}}^{-1}(s)\in \mathbb{H},$ for any  $t,s\in [0,1].$

Let  $\left\{Y_{n}(t),\ t\in [0,1],\ \ n\geq 1\right\}$ be the time--changed sequence of  random elements in $C_{\mathbb{H}}([0,1])$ defined by
\begin{eqnarray}&&\hspace*{-1cm} Y_{n}(t)=\frac{\sqrt{[nt]}}{\sqrt{n}}	V_{[nt]}(P_{Y_{0}}^{-1}(t))+\frac{(nt-[nt])}{\sqrt{n}}X_{[nt]+1}(P_{Y_{0}}^{-1}(t)),
	\label{fclt}
\end{eqnarray}
\noindent
for each  $t\in [0,1],$ where time--changed is introduced by inverse subordinator $P_{Y_{0}}^{-1}(t)=x,$ $x\in \mathbb{H},$ and hence, $P_{Y_{0}}(x)=t,$ $t\in  [0,1]$ (see  (\ref{eqmin})--(\ref{eqinf})). As before, $V_{n}$ denotes the empirical process   in  (\ref{efp}), and $\{X_{i}(x),\ i\geq 1\}$ is the $\mathbb{H}$--valued martingale difference sequence in Lemma  \ref{lem1a}.

  From (\ref{eq311b2000}), for  $t=P_{Y_{0}}(x)\in [0,1],$
$x\in \mathbb{H}$ (see also  \ref{sop})

\begin{eqnarray}
		&&\hspace*{-1.5cm}\Esp{\left\|\frac{1}{n}\sum_{j=1}^{[nt]}\Esp{X_{j}(P_{Y_{0}}^{-1}(t))\otimes X_{j}(P_{Y_{0}}^{-1}(t))|Y_{j-1}}-P_{Y_{0}}(P_{Y_{0}}^{-1}(t))C_{0}^{\varepsilon}
		\right\|_{L^{1}(\mathbb{H})}} 	\nonumber \\
		&=&  \frac{1}{n}\sum_{j=1}^{[nt]}\mathbb{E}_{Y_{0}}\lrc{\ind{Y_{0}\in E(P_{Y_{0}}^{-1}(t))}^{2}}
	\sum_{l=1}^{\infty }\Esp{\varepsilon_{j}\otimes \varepsilon_{j}(\widetilde{\phi}_{l})(\widetilde{\phi}_{l})}  -\sum_{l=1}^{\infty }P_{Y_{0}}((P_{Y_{0}}^{-1}(t)))C_{0}^{\varepsilon}(\widetilde{\phi}_{l})(\widetilde{\phi}_{l}) \nonumber\\
	&\leq & t\lrc{\sum_{l=1}^{\infty} C_{0}^{\varepsilon}(\widetilde{\phi}_{l})(\widetilde{\phi}_{l})-C_{0}^{\varepsilon}(\widetilde{\phi}_{l})(\widetilde{\phi}_{l})}\leq \lrc{\sum_{l=1}^{\infty} C_{0}^{\varepsilon}(\widetilde{\phi}_{l})(\widetilde{\phi}_{l})-C_{0}^{\varepsilon}(\widetilde{\phi}_{l})(\widetilde{\phi}_{l})}=0, \nonumber\\\label{zerlim}
\end{eqnarray}
\noindent uniformly in $t\in [0,1],$ where, as  in (\ref{KLE}),  $C_{0}^{\varepsilon}(\widetilde{\phi}_{l})(\widetilde{\phi}_{l})=\lambda_{l}(C^{\varepsilon}_{0}),$ $l\geq 1.$   In particular, Lemma 1(i) of the Appendix holds for $t=1.$

From \begin{eqnarray}\mathcal{T}(x)&= &\mathbb{E}\left[\|X_{1}(x)\|_{\mathbb{H}}^{2}\right]=
	\mathbb{E}\left[\left\|\varepsilon_{1}\ind{E^{0}(x)}\right\|_{\mathbb{H}}^{2}\right]=\mathbb{E}\left[\mathbb{E}\left[\left\|[Y_{1}-\Gamma (Y_{0})]\ind{E^{0}(x)}\right\|_{\mathbb{H}}^{2}|Y_{0}\right]\right]\nonumber\\
	&= &\int_{\mathbb{H}}\ind{u\in \mathbb{H};\ \left\langle u,\phi_{j}\right\rangle_{\mathbb{H}}\leq \left\langle  x,\phi_{j}\right\rangle_{\mathbb{H}},\ j\geq 1}^{2}\mbox{Var}\left([Y_{1}-\Gamma (Y_{0})]|Y_{0}=u\right)P_{Y_{0}}(du)=\left\|C_{0}^{\varepsilon}\right\|_{L^{1}(\mathbb{H})}
	P\left( E^{0}(x)\right)=\left\|C_{0}^{\varepsilon}\right\|_{L^{1}(\mathbb{H})}P_{Y_{0}}(E(x)),\nonumber\\
	\label{eqmp2}
\end{eqnarray}

\noindent
 applying strictly stationarity and $\mathbb{H}$--SWN property of $\varepsilon,$  for any  $n\geq 1,$ and $t\in (0,1],$

\begin{eqnarray}
	&\left|\frac{1}{n}\sum_{j=1}^{n}\Esp{\|X_{j}(P_{Y_{0}}^{-1}(t))\|^{2}_{\mathbb{H}}}-P_{Y_{0}}(P_{Y_{0}}^{-1}(t))\left\|C_{0}^{\varepsilon}
\right\|_{L^{1}(\mathbb{H})}\right|=0.
\label{ic}
\end{eqnarray}
\noindent In particular,   Lemma 1(ii)  of the Appendix holds for $t=1.$

Finally,   we prove $L^{1}_{\mathbb{H}}(\Omega,\mathcal{A},P)$ uniform convergence in
Lemma  1(iii) of the Appendix. Specifically, for any $r>0,$

\begin{eqnarray}
	 &&\hspace*{-1cm}\Esp{\frac{1}{n}\sum_{j=1}^{n}\Esp{ \|X_{j}(P_{Y_{0}}^{-1}(t))\|_{\mathbb{H}}^{2}\chi(\|X_{j}(P_{Y_{0}}^{-1}(t))\|^{2}_{\mathbb{H}}\geq rj)|Y_{1},\dots, Y_{j-1}}} \nonumber\\ \hspace*{0.5cm} &= & \Esp{\frac{1}{n}\sum_{j=1}^{n}\Esp{\|X_{j}(P_{Y_{0}}^{-1}(t))\|_{\mathbb{H}}^{2}\chi(\|X_{j}(P_{Y_{0}}^{-1}(t))\|^{2}_{\mathbb{H}}\geq rj)|Y_{j-1}}}  \nonumber\\
	\hspace*{0.5cm}  &\leq &\frac{1}{n}\sum_{j=1}^{n}\int_{\mathbb{H}}\ind{Y_{j-1}\in E(P_{Y_{j-1}}^{-1}(t))}(y_{j-1})\int_{\|\varepsilon_{j}\|_{
			\mathbb{H}}^{2}>rj}\|\varepsilon_{j}\|_{\mathbb{H}}^{2}P\lrp{d \varepsilon_{j}|y_{j-1}}dy_{j-1} \nonumber\\ \hspace*{0.5cm} & = & \frac{1}{n}\sum_{j=1}^{n}\int_{\mathbb{H}}\ind{Y_{0}\in E(P_{Y_{0}}^{-1}(t))}(y_{0})dy_{0}\lrc{\int_{\|\varepsilon_{0}\|_{\mathbb{H}}^{2}>rj}
\|\varepsilon_{0}\|_{\mathbb{H}}^{2}P\lrp{d \varepsilon_{0}}} \leq  \frac{1}{n}\sum_{j=1}^{\infty} \lrc{\int_{\|\varepsilon_{0}\|_{\mathbb{H}}^{2}>rj}
\|\varepsilon_{0}\|_{\mathbb{H}}^{2}P\lrp{d \varepsilon_{0}}}.
\label{i2}
\end{eqnarray}

Since $\mathbb{E}\left[\|\varepsilon_{0}\|_{\mathbb{H}}^{2}\right]<\infty, $ the Dominated Convergence Theorem leads to
\begin{equation}\int_{\|\varepsilon_{0}\|_{\mathbb{H}}^{2}>rj}
\|\varepsilon_{0}\|_{\mathbb{H}}^{2}P\lrp{d \varepsilon_{0}}\leq \mathcal{O}(j^{-\beta }),\ \beta>1,\label{dct}
\end{equation}
\noindent where absolutely integrability criterion has been applied. Hence, $$\sum_{j=1}^{\infty} \int_{\|\varepsilon_{0}\|_{\mathbb{H}}^{2}>rj}
\|\varepsilon_{0}\|_{\mathbb{H}}^{2}P\lrp{d \varepsilon_{0}}<\infty.$$\noindent   Equation (\ref{i2}) then converges to zero as $n\to \infty,$ uniformly in $t\in [0,1].$ In particular,  Lemma 1(iii)  holds for $t=1.$

Equations (\ref{zerlim}),  (\ref{ic}) and (\ref{i2}) also   imply, in particular,  the weak convergence,  in   the Skorokhod space $D_{\mathbb{H}}([0,1]),$ of  $W_{n}(t)=\frac{1}{\sqrt{n}}\sum_{i=1}^{[nt]}X_{i}(P_{Y_{0}}^{-1}(t))$ to  an $\mathbb{H}$--valued zero--mean Wiener process $W_{\infty},$  with autocovariance operator  $C_{0}^{\varepsilon}$ of  $W_{\infty}(P_{Y_{0}}^{-1}(1)).$

\end{proof}

\subsection{Consistency}\label{s4}

In the derivation of consistency of GoF test, the  following assumption is considered, which is of technical nature, providing, in particular,  information about the allowed distance between the null and the alternative hypothesis in the Hilbert--Schmidt operator norm.

\medskip

\noindent \textbf{Assumption A3}. $\Gamma -\Gamma_{0}\in \mathcal{S}(\mathbb{H}),$ where $\Gamma $ denotes the autocorrelation operator under  $H_{1},$ i.e., $P_{Y_{0}}\{ y\in \mathbb{H}:\  \Gamma (y) \neq\Gamma_{0}(y)\}>0,$ with $\Gamma_{0}$ being the  autocorrelation operator under $H_{0},$ which is assumed to be known. For each $x\in \mathbb{H},$ we also assume that  $\mathbb{E}\left[\ind{E^{0}(x)}Y_{0}\right]\otimes \mathbb{E}\left[\ind{E^{0}(x)}Y_{0}\right]\in L^{1}(\mathbb{H}).$

\medskip

In the next result, we will denote for  $y,z\in \mathbb{H},$ $$\lambda (y,z):=\mathbb{E}_{H_{1}}\left[ Y_{1}-\Gamma (Y_{0})+z|Y_{0}=y\right],$$
\noindent and we will consider $z=d(x):=[\Gamma-\Gamma_{0}](x),$ for $x\in \mathbb{H}.$
\begin{theorem}
\label{consistency}
Under Assumptions A1--A3, as $n\to \infty,$
\begin{equation}\sup_{x\in \mathbb{H}}\left\|n^{-1/2}\mathcal{D}_{n}(x)-\mathbb{E}_{H_{1}}\left[\lambda (Y_{0},d(Y_{0}))\ind{E^{(0)}(x)}\right]\right\|_{\mathbb{H}}\to_{\mbox{a.s}} 0,\label{eqc23b}
\end{equation}
\noindent where   $\mathbb{E}_{H_{1}}$ means that expectation is computed under the alternative, and
\begin{equation}\mathcal{D}_{n}(x):= \frac{1}{\sqrt{n}} \sum_{i=1}^{n}\lambda (Y_{i-1},d(Y_{i-1}))\ind{E^{i-1}(x)},\ x\in \mathbb{H},
\label{eqstatasc}
\end{equation}

\noindent with $\lambda (Y_{i-1},d(Y_{i-1}))=[\Gamma-\Gamma_{0}](Y_{i-1}),$
for $i\geq 1.$
\end{theorem}
\begin{proof}
The proof of this result  provides an extension to the $\mathbb{H}$--valued context  of the methodological approach adopted  in pp. 217--218 of \cite{KoulStute1999}. It follows from  condition   (2.36) in  Corollary 2.3 in \cite{Bosq2000}.

Under Assumptions A1--A3, to apply  Corollary 2.3 in  \cite{Bosq2000} we consider $$Z_{i}(x)=\lambda (Y_{i-1},d(Y_{i-1}))\ind{E^{i-1}(x)}-\mathbb{E}_{H_{1}}[\ind{E^{0}(x)} \lambda (Y_{0},d(Y_{0}))],\ i\geq 1.$$ \noindent  Applying strictly stationarity of $Y,$ and equation (\ref{eq311b2000}), we obtain

  \begin{eqnarray}
	\hspace*{-1.5cm}\mathbb{E}_{H_{1}}\left[\left\|Z_{1}(x)+\dots +Z_{p}(x)\right\|_{\mathbb{H}}^{2}\right]
%	\nonumber\\
%	&&
%		\hspace*{-0.5cm}
		&\leq& p\sum_{u\in \{-p,\dots,p\}}\left[1-\frac{|u|}{p}\right]\left|\mathbb{E}_{H_{1}}\left[ \ind{E^{0}(x)}\ind{E^{u}(x)}\left\langle \lambda (Y_{0},d(Y_{0})),\lambda (Y_{u},d(Y_{u}))\right\rangle_{\mathbb{H}}\right]\right|\nonumber\\
	&\leq& p\sum_{u\in \mathbb{Z}}\left|\mathbb{E}_{H_{1}}\left[ \ind{E^{0}(x)}\ind{E^{u}(x)}\left\langle \lambda (Y_{0},d(Y_{0})),\lambda (Y_{u},d(Y_{u}))\right\rangle_{\mathbb{H}}\right]\right|\nonumber\\
&=&p\sum_{u\in \mathbb{Z}}\left|\sum_{j\geq 1}[\gamma_{j}\left(\Gamma- \Gamma_{0}\right)]^{2}
\mathbb{E}_{P_{Y_{0}}}\left[\mathbb{E}_{H_{1}}\left[ \ind{E^{0}(x)}\ind{E^{u}(x)}[Y_{0}\otimes Y_{u}](\psi_{j})(\psi_{j})|Y_{0}\right]\right]\right|
\nonumber\\
&=&p\sum_{u\in \mathbb{Z}}\left|\sum_{j\geq 1}[\gamma_{j}\left(\Gamma- \Gamma_{0}\right)]^{2}
\mathbb{E}_{P_{Y_{0}}}\left[\ind{E^{0}(x)}P_{\sum_{t=0}^{u-1}\Gamma^{t}\left(\varepsilon_{u-t} \right)}\left( E_{\boldsymbol{\varepsilon}}^{u-1}(x-\Gamma^{u}(Y_{0}))\right)\right.\right.\nonumber\\
&&\hspace*{1cm}\times \left.\left.\left[\sum_{t=0}^{u-1} \mathbb{E}_{H_{1}}\left[ Y_{0}\otimes \Gamma^{t}\left(\varepsilon_{u-t} \right)(\psi_{j})(\psi_{j})|Y_{0}\right]+\Gamma^{u}[Y_{0}\otimes Y_{0}](\psi_{j})(\psi_{j})\right]\right]\right|\nonumber\\
&\leq& p\sum_{u\in \mathbb{Z}}\left|\sum_{j\geq 1}[\gamma_{j}\left(\Gamma- \Gamma_{0}\right)]^{2}
\mathbb{E}_{P_{Y_{0}}}\left[\ind{E^{0}(x)}\Gamma^{u}[Y_{0}\otimes Y_{0}](\psi_{j})(\psi_{j}) \right]\right|\nonumber\\
&\leq& p \left\|\Gamma- \Gamma_{0}\right\|^{2}_{\mathcal{S}(\mathbb{H})}\sum_{u\in \mathbb{Z}}
\left\|\Gamma^{u}\left[C_{0}^{Y^{T(x)}}+\mathbb{E}[Y^{T(x)}]\otimes \mathbb{E}[Y^{T(x)}]\right]\right\|_{L^{1}(\mathbb{H})}\nonumber\\ &\leq&
 p \left\|\Gamma- \Gamma_{0}\right\|^{2}_{\mathcal{S}(\mathbb{H})}\left[\left\|C_{0}^{Y^{T(x)}}\right\|_{L^{1}(\mathbb{H})}+\left\|\mathbb{E}[Y^{T(x)}]\otimes \mathbb{E}[Y^{T(x)}]\right\|_{L^{1}(\mathbb{H})}\right]
 \sum_{u\in \mathbb{Z}}\left\|\Gamma \right\|^{u}_{\mathcal{L}(\mathbb{H})},
 \label{eqc23bb}
 \end{eqnarray}
 \noindent  for every  $x\in \mathbb{H},$ where
$\left\{\gamma_{j}\left(\Gamma- \Gamma_{0}\right),\ j\geq 1\right\}$  denotes the sequence of singular values of operator $\Gamma- \Gamma_{0}.$ Its corresponding   right and left eigenfunction systems are  $\left\{\psi_{j},\ j\geq 1 \right\}$ and  $\left\{\widetilde{\psi}_{j},\ j\geq 1 \right\}.$ Here,  for $x\in \mathbb{H},$ \begin{eqnarray}
&&P_{\sum_{t=0}^{u-1}\Gamma^{t}\left(\varepsilon_{u-t} \right)}\left( E_{\boldsymbol{\varepsilon}}^{u-1}(x-\Gamma^{u}(y_{0}))\right)
=P\left(\omega \in \Omega;\ \sum_{t=0}^{u-1}\Gamma^{t}\left(\varepsilon_{u-t}(\omega )\right)(\phi_{j})\leq x(\phi_{j})-\Gamma^{u}(y_{0})(\phi_{j})\right),\nonumber \end{eqnarray}
\noindent  and $$C_{0}^{Y^{T(x)}}=\mathbb{E}\left[Y_{0}^{T(x)}\otimes Y_{0}^{T(x)}-\mathbb{E}[Y^{T(x)}]\otimes \mathbb{E}[Y^{T(x)}]\right],$$
\noindent denotes the autocovariance operator of the
truncated random variable
$Y_{0}^{T(x)}=\ind{E^{0}(x)}Y_{0}.$
 $\Gamma^{u}$ is the $u$ power of operator $\Gamma,$ which, as usual,  it is understood as $u$th self--composition of $\Gamma,$
 with  $\sum_{u\in \mathbb{Z}}\left\|\Gamma \right\|^{u}_{\mathcal{L}(\mathbb{H})}<\infty,$ since $\left\|\Gamma \right\|_{\mathcal{L}(\mathbb{H})}<1$
 for invertibility of model (\ref{eq1}) (see also Lemma 3.1, p.74 in \cite{Bosq2000} for more general formulations of such a condition).

Note  that
 $$\left\|C_{0}^{Y^{T(x)}}\right\|_{L^{1}(\mathbb{H})}=\mathbb{E}\left[\|Y_{0}^{T(x)}\|_{\mathbb{H}}^{2}\right]\leq \mathbb{E}\left[\|Y_{0}\|_{\mathbb{H}}^{2}\right]
 =\left\|C_{0}^{Y}\right\|_{L^{1}(\mathbb{H})}<\infty,$$

 \noindent and under Assumption A3, $\left\|\mathbb{E}[Y^{T(x)}]\otimes \mathbb{E}[Y^{T(x)}]\right\|_{L^{1}(\mathbb{H})}<\infty.$

From Corollary 2.3 in  \cite{Bosq2000},   equation (\ref{eqc23bb})  leads, as $n\to \infty,$ to the almost surely convergence
\begin{equation}\sup_{x\in \mathbb{H}}\left\|n^{-1/2}\mathcal{D}_{n}(x)-\mathbb{E}_{H_{1}}\left[\lambda (Y_{0},d(Y_{0}))\ind{E^{(0)}(x)}\right]\right\|_{\mathbb{H}}\to_{\mbox{a.s}} 0,
%\label{eqc23b}
\end{equation}
\noindent   yielding the consistency of the test.
\end{proof}
\begin{remark}
\label{remtest}
Conditionally to  $\mathbf{h}\in \mathbb{H},$ consistency of the test based on $\mathcal{T}(\mathbf{h})$  follows, in a similar way to (\ref{eqc23bb}),  considering, for  $i=1,\dots,p-1,$ \begin{eqnarray}Z_{i}^{\mathbf{h}}(x)&=&\left\langle Z_{i}(x),\mathbf{h} \right\rangle_{\mathbb{H}}=\lambda (Y_{i-1},d(Y_{i-1}))(\mathbf{h})\ind{E^{i-1}(x)} -\mathbb{E}_{H_{1}}[\ind{E^{0}(x)} \lambda (Y_{0},d(Y_{0}))(\mathbf{h})]\nonumber\\
&=&[\Gamma -\Gamma_{0}](Y_{i-1})(\mathbf{h})\ind{E^{i-1}(x)} -\mathbb{E}_{H_{1}}[\ind{E^{0}(x)} [\Gamma -\Gamma_{0}](Y_{0})(\mathbf{h})],\nonumber\end{eqnarray}
\noindent  and

\begin{eqnarray}
	\hspace*{-2.5cm}
\mathbb{E}_{H_{1}}\left[\left[\sum_{i=1}^{p}Z_{i}^{\mathbf{h}}(x)\right]^{2}\right]
		&\leq& p\sum_{u\in \{-p,\dots,p\}}\left[1-\frac{|u|}{p}\right]\left|\mathbb{E}_{H_{1}}\left[Z_{0}^{\mathbf{h}}(x)Z_{u}^{\mathbf{h}}(x)\right]\right|
\nonumber\\
	&\leq& p\sum_{u\in \mathbb{Z}}\left|\mathbb{E}_{H_{1}}\left[\ind{E^{0}(x)} \ind{E^{u}(x)} \lambda (Y_{0},d(Y_{0}))(\mathbf{h})\lambda (Y_{u},d(Y_{u}))(\mathbf{h})\right]\right|\nonumber\\
&\leq&
 p \left\|[\Gamma- \Gamma_{0}](\mathbf{h})\right\|^{2}_{\mathbb{H}}\left\|C_{0}^{Y^{T(x)}}+\mathbb{E}[Y^{T(x)}]\otimes \mathbb{E}[Y^{T(x)}]\right\|_{L^{1}(\mathbb{H})}\sum_{u\in \mathbb{Z}}\left\|\Gamma \right\|^{u}_{\mathcal{L}(\mathbb{H})}.\nonumber\end{eqnarray}
\end{remark}
\section{Asymptotic properties for composite null hypothesis}
\label{s6}

 In this section, we work under the assumption that $\Gamma_{0}$ is unknown, and must be estimated in a consistent way. We adopt the projection estimation approach introduced in  Chapter 8 in \cite{Bosq2000}, where two scenarios are distinguished, respectively corresponding to the cases of  known and unknown  eigenfunctions  $\left\{ \phi_{j},\ j\geq 1\right\}$ of  the autocovariance operator $C_{0}^{Y}$ of $Y.$ Here, we refer to the case of unknown eigenfunctions of operator $C_{0}^{Y}$ of $Y.$ For the case of known eigenfunctions,  see \ref{unknowneig}.

\medskip

Let $Y$ be an AR$\mathbb{H}$(1) process satisfying the following conditions:

\begin{itemize}\item[(i)] The eigenvalues $\{\lambda_{k}(C_{0}^{Y}),\ k\geq 1\}$ of $C_{0}^{Y}$ are such that
$$\lambda_{1}(C_{0}^{Y})> \lambda_{2}(C_{0}^{Y})>\dots  >\lambda_{j}(C_{0}^{Y})>\dots >0,$$
\noindent where, as before,  $C_{0}^{Y}(\phi_{k})=\lambda_{k}(C_{0}^{Y})\phi_{k},$  for every $k\geq 1.$
\item[(ii)]   For every $n\geq 2$ and $k\geq 1,$ consider $\{ \widetilde{\lambda }_{k,n}(\widehat{C}_{0,n}^{Y}),\ k\geq 1\}$ and  $\{ \phi_{k,n},\ k\geq 1\}$   such that
$$\widehat{C}_{0,n}^{Y}(\phi_{k,n})=\frac{1}{n}\sum_{i=1}^{n}Y_{i}\left\langle Y_{i},\phi_{k,n}\right\rangle_{\mathbb{H}}=
\widetilde{\lambda }_{k,n}(\widehat{C}_{0,n}^{Y})\phi_{k,n},\ k\geq 1,$$
\noindent and assume that   $\widetilde{\lambda }_{k,n}(\widehat{C}_{0,n}^{Y})>0$ a.s.
 \end{itemize}

We consider the estimator  $\widetilde{\Gamma}_{n}$ given by
\begin{equation}\widetilde{\Gamma}_{n}(\varphi )=\sum_{l=1}^{k_{n}}\widetilde{\gamma}_{n,l}(\varphi)\phi_{l,n},\quad \varphi\in \mathbb{H}, \ n\geq 2,
\label{est2}
\end{equation}
\noindent  with  $k_{n}\to \infty,$ and  $k_{n}/n\to 0,$ $n\to \infty,$ and
for $n\geq 1,$  $l\geq 1,$  and  $\varphi\in \mathbb{H},$  $$\widetilde{\gamma}_{n,l}(\varphi )=
	\frac{1}{n-1}\sum_{i=1}^{n-1}\sum_{j=1}^{k_{n}}[\widetilde{\lambda }_{j,n}(\widehat{C}_{0,n}^{Y})]^{-1}\left\langle \varphi ,\phi_{j,n}\right\rangle_{\mathbb{H}}
\left\langle  Y_{i},\phi_{j,n}\right\rangle_{\mathbb{H}}
\left\langle Y_{i+1},\phi_{l,n} \right\rangle_{\mathbb{H}}.$$
Assume that the   conditions of Theorems 8.7--8.8 in  \cite{Bosq2000} hold under the null
$$\widetilde{H_{0}}: \Gamma =\Gamma_{0},\ \mbox{for some} \ \Gamma_{0}\in \Theta_{0}.$$
\noindent Hence,  $\widetilde{\Gamma}_{n}$   provides a strongly consistent estimator of $\Gamma $ under the null.

We  consider the plug--in  $\mathbb{H}$--valued empirical process
\begin{eqnarray}
\widetilde{V}_{n}&:=&\left\{\widetilde{V}_{n}(x),\ x\in \mathbb{H}\right\}=\left\{\frac{1}{\sqrt{n}}\sum_{i=1}^{n}(Y_{i}-\widetilde{\Gamma}_{n} (Y_{i-1}))\ind{E^{i-1}(x)},\ x\in \mathbb{H}\right\}.\label{eqtest}
\end{eqnarray}
\noindent The following result derives the asymptotic equivalence in probability of $V_{n}$ and $\widetilde{V}_{n}.$

\begin{theorem}
\label{th4}
Under conditions of Theorems 3.9 and 8.7--8.8 in  \cite{Bosq2000}, the following identity holds:
\begin{eqnarray}
	\sup_{x\in \mathbb{H}}\|\widetilde{V}_{n}(x)-V_{n}(x)\|_{\mathbb{H}}=o_{P}(1),\quad n\to \infty.
	\label{eqconstest}
\end{eqnarray}
\end{theorem}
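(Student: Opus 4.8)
The plan is to isolate the two sources of discrepancy between $\widetilde V_n$ and $V_n$: the replacement of the true operator $\Gamma$ by the estimator $\widetilde\Gamma_n$ in the marks, and the replacement of the true eigenfunctions $\{\phi_j\}$ by the empirical eigenfunctions $\{\phi_{j,n}\}$ in the indicator sets. Denote by $\widetilde E^{(i-1)}(x)$ the event $E^{(i-1)}(x)$ computed with $\phi_{j,n}$ in place of $\phi_j$. Since $Y_i-\Gamma(Y_{i-1})=\varepsilon_i$, adding and subtracting $\tfrac{1}{\sqrt n}\sum_i\varepsilon_i 1_{\widetilde E^{(i-1)}(x)}$ yields
\[
\widetilde V_n(x)-V_n(x)=\underbrace{\frac{1}{\sqrt n}\sum_{i=1}^n \varepsilon_i\big(1_{\widetilde E^{(i-1)}(x)}-1_{E^{(i-1)}(x)}\big)}_{A_n(x)}+\underbrace{\frac{1}{\sqrt n}\sum_{i=1}^n(\Gamma-\widetilde\Gamma_n)(Y_{i-1})\,1_{\widetilde E^{(i-1)}(x)}}_{B_n(x)}.
\]
It then suffices to prove $\sup_{x\in H}\|A_n(x)\|_H=o_P(1)$ and $\sup_{x\in H}\|B_n(x)\|_H=o_P(1)$ separately.

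First I would dispatch the operator term $B_n$. Bounding the indicator by $1$ and using the definition of the operator norm on $H$,
\[
\sup_{x\in H}\|B_n(x)\|_H\le \|\widetilde\Gamma_n-\Gamma\|_{\mathcal L(H)}\;\sqrt n\;\frac1n\sum_{i=1}^n\|Y_{i-1}\|_H .
\]
By the strong law of large numbers for the strictly stationary (ergodic) process $Y$ of \textbf{Assumption A1}, $\frac1n\sum_i\|Y_{i-1}\|_H\to E\|Y_0\|_H<\infty$ almost surely, so the last factor is $O_P(1)$ and the whole bound is of order $\sqrt n\,\|\widetilde\Gamma_n-\Gamma\|_{\mathcal L(H)}$. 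The conclusion $\sup_x\|B_n(x)\|_H=o_P(1)$ is then exactly the strong--consistency rate of the projection estimator furnished by the conditions of Lemma 5 (and the strong--consistency statements of Lemmas 4 and 6) in the Supplementary Material, calibrated to give $\sqrt n\,\|\widetilde\Gamma_n-\Gamma\|_{\mathcal L(H)}\to 0$ in probability. Observe that this term is controlled uniformly in $x$ at no cost, since the final bound does not involve $x$.

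It remains to treat the eigenfunction term $A_n$. Under \textbf{Assumption A4} the innovations are uniformly bounded, $\|\varepsilon_i\|_H\le C$ almost surely, whence
\[
\sup_{x\in H}\|A_n(x)\|_H\le C\,\sup_{x\in H}\frac1{\sqrt n}\sum_{i=1}^n 1_{\widetilde E^{(i-1)}(x)\triangle E^{(i-1)}(x)} .
\]
The two events differ only through the basis entering their (infinitely many) coordinatewise constraints, and the strong consistency $\|\phi_{j,n}-\phi_j\|_H\to 0$ of the empirical eigenfunctions (Lemma 6, under the spectral--gap \textbf{Assumption A6}) makes each symmetric difference a thin boundary slab. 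I would bound the $P_{Y_0}$--measure of these slabs by a modulus depending on $\sup_j\|\phi_{j,n}-\phi_j\|_H$, and upgrade the pointwise estimate to a uniform--in--$x$ one by a bracketing/monotonicity argument exploiting the product (coordinatewise) ordering of the level sets $E^{(i-1)}(x)$.

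The hard part will be precisely this uniform--in--$x$ control of $A_n$. Unlike $B_n$, the symmetric--difference frequency genuinely depends on $x$ through the level sets, so the supremum over the infinite--dimensional index $H$ cannot be discharged by a trivial bound and instead requires stochastic equicontinuity (entropy/chaining) estimates, combined with a rate $\sup_j\|\phi_{j,n}-\phi_j\|_H=o_P(n^{-1/2})$. Securing simultaneously these faster--than--$\sqrt n$ rates for both the operator estimator and the empirical eigenelements, rather than plain consistency, is the crux on which the $o_P(1)$ conclusion rests, and this is exactly what the conditions summarized in Lemma 5 are designed to guarantee.
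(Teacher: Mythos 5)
There is a genuine gap, and it lies in your treatment of the operator term $B_n$. After bounding the indicator by $1$ you apply the triangle inequality term by term, obtaining $\sup_{x}\|B_{n}(x)\|_{H}\leq \|\widetilde{\Gamma}_{n}-\Gamma\|_{\mathcal{L}(H)}\,\sqrt{n}\,\tfrac{1}{n}\sum_{i}\|Y_{i-1}\|_{H}$. Since $\tfrac{1}{n}\sum_{i}\|Y_{i-1}\|_{H}\to E\|Y_{0}\|_{H}>0$ a.s., this bound is of exact order $\sqrt{n}\,\|\widetilde{\Gamma}_{n}-\Gamma\|_{\mathcal{L}(H)}$, so your argument needs $\|\widetilde{\Gamma}_{n}-\Gamma\|_{\mathcal{L}(H)}=o_{P}(n^{-1/2})$. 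The conditions of Lemma 5 (which package the strong--consistency results of Chapter 8 of Bosq) deliver only consistency, with a.s. rates far slower than $n^{-1/2}$; a $\sqrt{n}$--rate in operator norm is not available for projection estimators of the autocorrelation operator of an ARH(1) process, so this step cannot be repaired by "calibrating" the hypotheses. The paper avoids this by \emph{not} pulling the norm inside the sum: it keeps $\frac{1}{\sqrt{n}}\sum_{i}[\widetilde{\Gamma}_{n}-\Gamma](Y_{i-1})$ intact, uses linearity to bound it by $\|\widetilde{\Gamma}_{n}-\Gamma\|_{\mathcal{L}(H)}\cdot\|\frac{1}{\sqrt{n}}\sum_{i}Y_{i-1}\|_{H}$, and exploits the cancellation in the centered partial sums (via Theorem 2.5(2) of Bosq) so that the second factor is bounded in probability. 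Plain consistency $\|\widetilde{\Gamma}_{n}-\Gamma\|_{\mathcal{L}(H)}=o_{P}(1)$ then suffices. Your bound destroys precisely this cancellation.

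Your term $A_n$ is a separate issue. The misspecified process actually analyzed in the paper, equation (\ref{dep}), retains the \emph{true} eigenfunctions $\phi_j$ in the indicator sets and replaces only $\Gamma$ by $\Gamma_n$; the difference $\widetilde{V}_{n}(x)-V_{n}(x)$ therefore reduces to your $B_n$ alone, and the paper's proof handles nothing else. You are solving a strictly harder variant in which the empirical eigenfunctions also enter the indicators, and for that variant your argument is only a programme: the claimed control of the symmetric differences requires a rate $\sup_{j}\|\phi_{j,n}-\phi_{j}\|_{H}=o_{P}(n^{-1/2})$ together with an entropy/chaining bound over the infinite--dimensional index set, neither of which is established (nor follows from Lemma 6, which again gives consistency, not $\sqrt{n}$--rates). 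As written, both halves of your decomposition rest on faster--than--$\sqrt{n}$ rates that the cited lemmas do not provide, whereas the paper's (admittedly terse) argument is designed to need only $o_P(1)$ consistency.
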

\noindent \begin{remark}
From  Theorem \ref{th4}, Theorems \ref{th1} and \ref{th3} also characterize the asymptotic behavior of $\widetilde{V}_{n}.$
\end{remark}
\begin{proof} The main ingredients in the proof of this result are the strong consistency of $\widetilde{\Gamma}_{n},$ derived in
Theorems 8.7--8.8 of  \cite{Bosq2000}, and the convergence to zero in probability of $\frac{\sum_{i=1}^{n}Y_{i-1}}{\sqrt{n}},$ obtained from
 Theorem 3.9 of \cite{Bosq2000}.  Both results are applied to obtain the convergence   to zero in probability of $\widetilde{V}_{n}(x)-V_{n}(x)$
 in $\mathbb{H}$--norm,  leading to their asymptotic equivalence in probability.

Applying Cauchy--Schwartz inequality we obtain  for every $x\in \mathbb{H},$
	\begin{eqnarray}&&
	\hspace*{-0.5cm}	P\left[\|\widetilde{V}_{n}(x)-V_{n}(x)\|_{\mathbb{H}}^{2}>\eta \right]
		 		\leq  P\left[\left\|\widetilde{\Gamma}_{n}-\Gamma\right\|_{\mathcal{L}(\mathbb{H})}
		\left\|\frac{\sum_{i=1}^{n}Y_{i-1}}{\sqrt{n}}\right\|_{\mathbb{H}}>\sqrt{\eta}\right].
		\label{eq1proofth4}
	\end{eqnarray}

From Theorems 8.7--8.8   in  \cite{Bosq2000},  as $n\to \infty,$ $\left\|\widetilde{\Gamma}_{n}-\Gamma\right\|_{\mathcal{L}(\mathbb{H})}$  converges in probability to zero.
 From equation (3.35) in Theorem 3.9 of \cite{Bosq2000}, for $\eta >0,$

\begin{equation}
P\left[\left\|\frac{\sum_{i=1}^{n}Y_{i-1}}{\sqrt{n}}\right\|_{\mathbb{H}}>\eta \right]\leq 4\exp\left(-\frac{\sqrt{n}\eta^{2}}{\alpha_{0}+\beta_{0}\eta}\right),\label{ep1}
\end{equation}
\noindent where $\alpha_{0}$ and $\beta_{0}$ depend on $\Gamma ,$ and $P_{\varepsilon_{1}}.$ Hence, $\left\|\frac{\sum_{i=1}^{n}Y_{i-1}}{\sqrt{n}}\right\|_{\mathbb{H}}$ converges to zero in probability when $n\to \infty.$
Therefore,  (\ref{eq1proofth4}) converges to zero as $n\to \infty.$ Thus, $\widetilde{V}_{n}$ and $V_{n}$ are
asymptotically equivalent in probability in the norm of $\mathbb{H}.$	
\end{proof}

\section{Final comments}
\label{fc}
In the context of  functional residual marked empirical processes  indexed by an $\mathbb{H}$--valued covariate,
  GoF asymptotics based on their  random projection  would require the application of a bivariate version of  Theorem 4.1 in  \cite{Cuestalbertos}. Our paper exploits inverse subordination  for dimension reduction, and technical simplification purposes. Furthermore, our AR$\mathbb{H}$(1) setting involves mild conditions,  allowing the  application of Central Limit Theorems,  and Functional Central Limit Theorems for $\mathbb{H}$--valued martingale differences sequences. This fact constitutes an important advantage of our approach leading to GoF asymptotic analysis in infinite--dimensions.  The novelty in our equivalent formulation of the null hypothesis relies on our characterization of a Borel separating class in $\mathbb{H}$ via infinite--dimensional non--degenerate Gaussian  measures with trace autocovariance operator, whose RKHS defines the index set of the separating class. The $l^{2}$ identification of these measures with infinite product tight Gaussian cylindrical measures on $(\mathbb{R}^{\infty},\mathcal{B}(\mathbb{R}^{\infty}))$ allows us to work in a friendly environment, closely connected with finite--dimensional projections and cylindrical sets.  Indeed, our approach is supported by the similar role that these Gaussian measures in infinite--dimensions play in relation to the uniform measure in finite  dimensions.

In a near future, we also incorporate to our   GoF asymptotic analysis, the formulation of ADF bivariate random projected tests, extending currently applied GoF asymptotic methodology in \cite{Cuesta}, for the functional linear model with scalar response, to the context of the functional linear regression model with $\mathbb{H}$--valued response and covariate. We will cover the cases of independent, and weak--dependent  functional data. Another challenging topic to be addressed  is the formulation, and asymptotic analysis of  GoF tests for  stationary LRD functional time series in  the spectral domain, where the  approach presented in \cite{RuizMedina2022} can support  some advances, in the framework of functional time series in  connected and compact two--point homogeneous spaces (see also \cite{Li2020}  for the nonstationary case).

\section*{Acknowledgements}
\noindent This work has been supported in part by projects PID2022--142900NB-I00, PID2024-158017NB-I00 and PID2020-116587GB-I00, financed by
MCIU/AEI/10.13039/501100011033 and by FEDER UE,  and by CEX2020-001105-M \linebreak  MCIN/AEI/ 10.13039/501100011033. It has also been partially supported  by regional grants ED431C 2021/24 and ED431C 2025/03 (Grupos competitivos),  financed by Xunta de Galicia through European Regional Development Funds (ERDF).

\appendix

 \section{Second--order properties of   $X(x) =\{X_{i}(x),\ i\geq 1\},$ $x\in \mathbb{H}$}
\label{sop}
Note that, for each $x\in \mathbb{H},$ the strongly integrability of the marginals of  the $\mathbb{H}$--valued martingale difference sequence   $\{X_{i}(x),\ i\geq 1\}$     allows the computation of the means and conditional means of the elements of this sequence from  their weak counterparts (Section 1.3 in  \cite{Bosq2000}).

Under \textbf{Assumptions A1--A2},
$\mathbb{E}[X_{i}(x)]=0,$ for any $i\geq 1,$ and $x\in \mathbb{H},$ and  $\mathcal{T}(x)=\mbox{Var}(X_{1}(x))$ can be computed as follows:
\begin{eqnarray}\mathcal{T}(x)&= &\mathbb{E}\left[\|X_{1}(x)\|_{\mathbb{H}}^{2}\right]=
	\mathbb{E}\left[\left\|\varepsilon_{1}\ind{E^{0}(x)}\right\|_{\mathbb{H}}^{2}\right]=\mathbb{E}\left[\mathbb{E}\left[\left\|[Y_{1}-\Gamma (Y_{0})]\ind{E^{0}(x)}\right\|_{\mathbb{H}}^{2}|Y_{0}\right]\right]\nonumber\\
	&= &\int_{\mathbb{H}}\ind{u\in \mathbb{H};\ \left\langle u,\phi_{j}\right\rangle_{\mathbb{H}}\leq \left\langle  x,\phi_{j}\right\rangle_{\mathbb{H}},\ j\geq 1}^{2}\mbox{Var}\left([Y_{1}-\Gamma (Y_{0})]|Y_{0}=u\right)P_{Y_{0}}(du)=\left\|C_{0}^{\varepsilon}\right\|_{L^{1}(\mathbb{H})}
	P\left( E^{0}(x)\right)=\left\|C_{0}^{\varepsilon}\right\|_{L^{1}(\mathbb{H})}P_{Y_{0}}(E(x)),\nonumber\\
	\label{eqmp2}
\end{eqnarray}
\noindent where we  have  applied  $\mbox{Var}\left([Y_{1}-\Gamma (Y_{0})]|Y_{0}=u\right)=\mbox{Var}\left(Y_{1}-\Gamma (Y_{0})\right)$   $=\|C_{0}^{\varepsilon}\|_{L^{1}(\mathbb{H})},$  for all $u\in \mathbb{H},$ under \textbf{Assumptions A1--A2}.
As before, $P_{Y_{0}}$ denotes the infinite--dimensional marginal probability  measure induced by $Y_{0},$ and  $E^{0}(x)$ is the event
$E^{0}(x)=\{\omega \in \Omega ; \ \left\langle Y_{0}(\omega ),\phi_{j}\right\rangle_{\mathbb{H}}\leq \left\langle  x,\phi_{j}\right\rangle_{\mathbb{H}},\ j\geq 1\},$  for each  $x\in \mathbb{H}.$ From \begin{eqnarray}
	Y_{i-1}=\sum_{t=0}^{i-2}\Gamma^{t}(\varepsilon_{i-1-t})+\Gamma^{i-1}(Y_{0}),\quad i\geq 1.
	\label{eq311b2000}
\end{eqnarray}\noindent  applying  the strictly stationarity of  $Y,$ for each $x\in \mathbb{H},$ the autocovariance operator  $C_{0}^{X_{i}(x)}:=\mathbb{E}\left[ X_{i}(x)\otimes X_{i}(x)\right]$ of
$\{X_{i}(x),\ i\geq 1\}$ is given by
\begin{eqnarray}
	C_{0}^{X_{i}(x)}&:=&\mathbb{E}\left[ X_{i}(x)\otimes X_{i}(x)\right]=\mathbb{E}\left[\varepsilon_{i}\otimes \varepsilon_{i}\ind{E^{i-1}(x)}^{2}\right]
	=\mathbb{E}\left[ \ind{E^{i-1}(x)}^{2}
	\mathbb{E}\left[\varepsilon_{i}\otimes \varepsilon_{i}|Y_{i-1}\right]\right]=\mathbb{E}\left[\varepsilon_{i}\otimes \varepsilon_{i}\right]
	P\left( E^{i-1}(x)\right)\nonumber\\
	&= &C_{0}^{\varepsilon}P(E^{(i-1)}(x))= C_{0}^{\varepsilon}P_{Y_{0}}(E(x)),\quad \forall  i\geq 1.
	\label{eqmp2b}
\end{eqnarray}

In a similar way,
the covariance operator $C^{X_{i}(x),X_{k}(y)}_{i,k}:=\mathbb{E}\left[ X_{i}(x)\otimes X_{k}(y)\right]$ can be computed  for every $x,y\in \mathbb{H},$
\begin{eqnarray}
	C^{X_{i}(x),X_{k}(y)}_{i,k}&:=&\mathbb{E}\left[ X_{i}(x)\otimes X_{k}(y)\right]=\delta_{i,k}C_{0}^{\varepsilon}P\left( \omega \in \Omega ; \ \left\langle Y_{i-1}(\omega ),\phi_{j}\right\rangle_{\mathbb{H}}\leq \min\left(\left\langle  x,\phi_{j}\right\rangle_{\mathbb{H}},\left\langle  y,\phi_{j}\right\rangle_{\mathbb{H}}\right),\ j\geq 1\right)
	\nonumber\\   &=&\delta_{i,k}C_{0}^{\varepsilon}P\left( \omega \in \Omega ; \ \left\langle Y_{0}(\omega ),\phi_{j}\right\rangle_{\mathbb{H}}\leq \min\left(\left\langle  x,\phi_{j}\right\rangle_{\mathbb{H}},\left\langle  y,\phi_{j}\right\rangle_{\mathbb{H}}\right),\ j\geq 1\right)\nonumber\\   &=&
	\delta_{i,k}C_{0}^{\varepsilon}P_{Y_{0}}\left[\prod_{j=1}^{\infty }(-\infty ,\min\{ x(\phi_{j}),y(\phi_{j})\}]\right],\ i,k\geq 1,
	\label{sttb}
\end{eqnarray}
\noindent where,  for $i,k\in \mathbb{Z},$
$\delta_{i,k}=0$  if $i\neq k,$ and $\delta_{i,k}=1$ if $i=k.$

\section{Invariance principle based on Robbins--Monro procedure}
\label{Monro}
\begin{lemma}
	\label{th2}
	Let $\{X_{n},\ n\in \mathbb{N}\}$ be a martingale difference sequence of $\mathbb{H}$--valued random variables,  with respect to  the filtration  $\mathcal{M}_{0}^{Y}\subset \mathcal{M}_{1}^{Y}\dots \subset \mathcal{M}_{n}^{Y}\subset \dots,$
	satisfying $\Esp{\|X_{n}\|_{\mathbb{H}}^{2}}<\infty.$  Let $S:\mathbb{H}\to \mathbb{H}$ be a symmetric, positive semidefinite,
	trace operator. For each $n\in \mathbb{N},$ denote as $S^{n}$ the autocovariance operator of $X_{n},$ given $Y_{1},\dots,Y_{n-1}.$  That is,
	$$S^{n}=\Esp{X_{n}\otimes X_{n}|Y_{1},\dots,Y_{n-1}}, \quad n\in \mathbb{N}.$$
	
	Assume that
	\begin{itemize}
		\item[(i)] $\Esp{\left\|\frac{1}{n}\sum_{j=1}^{n}S^{j}-S\right\|_{L^{1}(\mathbb{H})}}\to 0,$ $n\to \infty.$
		\item[(ii)] $\frac{1}{n}\sum_{j=1}^{n}\Esp{\|X_{j}\|^{2}_{\mathbb{H}}}\to \mathrm{trace}(S),$ $n\to \infty.$
		\item[(iii)] For $r>0,$  $$\frac{1}{n}\sum_{j=1}^{n}\Esp{ \|X_{j}\|_{\mathbb{H}}^{2}\chi(\|X_{j}\|^{2}_{\mathbb{H}}\geq rj)|Y_{1},\dots, Y_{j-1}}\to_{P} 0,\ n\to \infty.$$
	\end{itemize}
	
	Then, the sequence of random elements $\{Z_{n}\}$  in $C_{\mathbb{H}}([0,1])$ with the supremum norm, which are defined by
	\begin{equation}
		Z_{n}(t)=\frac{1}{\sqrt{n}}\sum_{j=1}^{[nt]}X_{j}+\left(nt-[nt]\right)\frac{1}{\sqrt{n}}X_{[nt]+1},\quad t\in [0,1],
		\label{eqrech01}
	\end{equation}
	\noindent converges in  distribution to a Brownian motion $W$ in $\mathbb{H},$  with $W(0)=0,$ a.s.,  $\Esp{W(1)}=0,$ and covariance operator $S$ of $W(1).$
\end{lemma}

\section{Composite null hypothesis under known eigenfunctions of $C_{0}^{Y}$}
\label{unknowneig}

Let $Y$ be an AR$\mathbb{H}$(1) process satisfying  the ARH(1) equation \begin{equation}Y_{t}=\Gamma (Y_{t-1})+\varepsilon_{t},\quad t\in \mathbb{Z}.\label{eq1}\end{equation}\noindent  and the following conditions:
\begin{itemize}
	\item[(i)] $Y$  is a standard AR$\mathbb{H}$(1) process with $\mathbb{H}$--SWN innovations, and   $E\|Y_{0}\|_{\mathbb{H}}^{4}<\infty.$
	\item[(ii)] The eigenvalues $\{\lambda_{k}(C_{0}^{Y}),\ k\geq 1\}$ of $C_{0}^{Y}$ are strictly positive.
	\item[(iii)] $P\left(\left\langle Y_{0},\phi_{k}\right\rangle_{\mathbb{H}}=0\right)=0,$ for every $k\geq 1.$
\end{itemize}

Under the  conditions assumed in  Lemma 8.1, and Theorems 8.5--8.6 of \cite{Bosq2000}, a consistent estimator $\widehat{\Gamma}_{n}$ of $\Gamma $ is given by
\begin{equation}\widehat{\Gamma}_{n}(\varphi )=\sum_{l=1}^{k_{n}}\gamma_{n,l}(\varphi )\phi_{l},\quad \varphi \in \mathbb{H}, \ n\geq 2,\label{eqestgamma}
\end{equation}

\noindent where $k_{n}\to \infty,$ and  $k_{n}/n\to 0,$ $n\to \infty,$ and
\begin{eqnarray}\gamma_{n,l}(\varphi )&=&\frac {1}{n-1}\sum_{i=1}^{n-1}\sum_{j=1}^{k_{n}}\widehat{\lambda }_{j,n}^{-1}(C_{0}^{Y})\left\langle \varphi ,\phi_{j}\right\rangle_{\mathbb{H}}
	\left\langle  Y_{i},\phi_{j}\right\rangle_{\mathbb{H}}
	\left\langle Y_{i+1},\phi_{l} \right\rangle_{\mathbb{H}}\label{rhocoef}\\
	\widehat{\lambda }_{k,n} (C_{0}^{Y})&=&  \frac{1}{n}\sum_{i=1}^{n}\left(\left\langle Y_{i}, \phi_{k} \right\rangle_{\mathbb{H}}\right)^{2},\  k\geq 1,\  n\geq 2,
	\nonumber
\end{eqnarray}
\noindent with $\left\{ \phi_{j},\ j\geq 1 \right\}$ being, as before, the  eigenfunction system of $C_{0}^{Y}$ which is assumed to be known.

\noindent Note that under (i)--(iii),

$$\|\widehat{\Gamma}_{n}\|_{\mathcal{L}(\mathbb{H})}\leq \|\widehat{C}_{1,n}^{Y}\|_{\mathcal{L}(\mathbb{H})}\max_{1\leq j\leq k_{n}}\widehat{\lambda }_{j,n}^{-1}(C_{0}^{Y}),\quad \widehat{C}_{1,n}^{Y}=\frac{1}{n-1}\sum_{i=1}^{n-1} Y_{i}\otimes Y_{i+1}.$$

\noindent The design of K--S test under this scenario is based on the  generalized $\mathbb{H}$--valued plug--in empirical process
\begin{equation}\widetilde{V}_{n}^{\phi }(x)=
	\frac{1}{\sqrt{n}}\sum_{i=1}^{n}(Y_{i}- \widehat{\Gamma}_{n} (Y_{i-1}))\ind{E^{i-1}(x)}.
	\label{depbb}
\end{equation}
Specifically, we consider the composite null hypothesis
$$\widetilde{H_{0}}: \Gamma =\Gamma_{0},\ \mbox{for some} \ \Gamma_{0}\in \Theta_{0},$$

\noindent where the family of AR$\mathbb{H}$(1) models under the null $\Theta_{0}$ satisfies the  conditions assumed in  Lemma 8.1, and Theorems 8.5--8.6 of \cite{Bosq2000}.

In a similar way to Theorem 4, from the  strong--consistency in the norm of $\mathcal{L}(\mathbb{H})$ of
$\widehat{\Gamma}_{n},$  under the conditions of Theorems 3.9, the asymptotic equivalence in probability of $\left\{\widetilde{V}_{n}^{\phi },\ x\in \mathbb{H}\right\}$ and
$\left\{V_{n}(x),\ x\in \mathbb{H}\right\}$ follows.  Thus, Theorems 1 and 2 also characterize the asymptotic behavior of $\widetilde{V}_{n}^{\phi }$ under the conditions of Theorems 3.9 and 8.5--8.6  in  \cite{Bosq2000}.

\section{Illustration of the performance of GoF}
\label{ill}
This section presents two illustrations of the proposed GoF test in the context of AR$\mathbb{H}$(1) ($\mathbb{H}=L^{2}([0,1])$), and SP$\mathbb{H}$AR(1) models in \ref{illb} and \ref{sph1}, respectively. In the last case  (see, e.g., \cite{CaponeraMarinucci}),
the assumed  invariance property of the kernels defining the autoregression, and covariance operators  of  SP$\mathbb{H}$AR(1) process  leads to an important dimension reduction in the implementation.
Similar results hold under this invariance property in the case of autoregressive processes on compact and connected two point homogeneous spaces (see, e.g., \cite{MaMalyarenko}; \cite{OvalleRM24};  \cite{RuizMD}).

\subsection{Detecting independence and nonlinearities}
\label{illb}

Finite sample performance of the   proposed  GoF test   is now  illustrated under  simple and composite hypothesis.

Let us first consider the simple null hypothesis
\begin{eqnarray}&& H_0: \Gamma=\mathbf{0},\nonumber\\ && H_1: \Gamma\neq\mathbf{0}.\nonumber\end{eqnarray}
In this section, we adopt the numerical approach  in  \cite{GPortugues21,Alvarez25}  for comparative purposes. The  type I empirical error, and the empirical power, based on $R=500$ repetitions, for functional samples sizes  $n=50,100,200,$ are computed.   The critical values of  the test statistics  are approximated from \emph{Fast Bootstrap} based on $B=2000$ bootstrap replicates.

Under a Gaussian scenario, the functional values of an AR$\mathbb{H}$(1) process are generated with support  in the interval $[0,1],$ evaluated at $71$ temporal nodes.  The   innovation process is assumed to be Gaussian $L^{2}([0,1])$--SWN noise. Its  integral  autocovariance operator has exponential kernel given by

\begin{eqnarray}
	&&  C_{0}^{\varepsilon}(u,v)=\mathbb{E}\left[\varepsilon_{t}(u)\otimes \varepsilon_{t}(v)\right]=
	\sigma^{2}_{\varepsilon}\exp\left(\frac{-|u-v|}{\theta}\right)\nonumber\\ &&  u,v= (i-1)/70, \  i=1,\ldots,71, \ \sigma_{\varepsilon}=0.10, \ \theta=0.6.\nonumber
\end{eqnarray}

\noindent A Gaussian  random initial condition $Y_{0}$ is also generated,  independently of the innovation process $\varepsilon$. $Y_{0}$  has  exponential  autocovariance kernel having the same scale parameter values, $\sigma_{\varepsilon}=0.10,  \theta=0.6, $ as  the innovation process  $\varepsilon$. The recursive equation  (\ref{eq1}) is implemented from  the numerical approximation of the integral $\Gamma(Y_{t-1})(u):=\int \Gamma(u,v)Y_{t-1}(v)dv,$
where, under the alternative  $H_1$,  the kernel of the integral autocorrelation operator   $\Gamma$  is given by $\Gamma(u,v)=\frac{0.7}{71}\exp\left(\frac{-(u^2+v^2)}{0.7468}\right),$ for $u,v \in [0,1]$. The constants are ensuring that the process is stationary.
To remove dependence from the random initial condition,  a  burn-in period of  $n_{0}=500$ observations is considered. That is, for each functional sample size $n,$  the above--described recursive approximation of the values $Y_0,Y_1,\dots,Y_{n_{0}},\dots,Y_{n_{0}+n}$ is computed removing the values  $Y_0,\dots,Y_{n_{0}}.$

As indicated, for comparative purposes, the random projection methodology is implemented  from a bivariate infinite--dimensional Gaussian measure $(\gamma_{\varepsilon }, \gamma_{Y})\sim \mu_{[C_{0}^{\varepsilon}, C_{0}^{Y}]}$ with independent components. Thus, we consider the bivariate random projected  empirical process
$$V_{n}(x)=
\frac{1}{\sqrt{n}}\sum_{i=1}^{n}\left\langle Y_{i}-\Gamma (Y_{i-1}),\gamma_{\varepsilon }\right\rangle_{\mathbb{H}}\ind{ \omega \in \Omega;\ \left\langle Y_{i-1}(\omega ),\gamma_{Y}\right\rangle_{\mathbb{H}}\leq \left\langle  x,\gamma_{Y}\right\rangle_{\mathbb{H}}},$$

\noindent where $\left\{\gamma_{\varepsilon }(u),\  u\in [0,1]\right\}$ and $\left\{\gamma_{Y}(u),\ u\in [0,1]\right\}$
are   independent  realizations of zero--mean Gaussian processes  with  autocovariance operators   $C_{0}^{\varepsilon }$ and $C_{0}^{Y},$ respectively. These processes are generated from their truncated, at term $M=5,$  Karhunen--Lo\'eve expansions.

\emph{Fast Bootstrap} is implemented from the equation
\begin{equation}
	V_{n}^{*b}(x)=
	\frac{1}{\sqrt{n}}\sum_{i=1}^{n}\eta_i \left\langle Y_{i}-\Gamma (Y_{i-1}),\gamma_{\varepsilon }\right\rangle_{\mathbb{H}}\ind{ \omega \in \Omega;\ \left\langle Y_{i-1}(\omega ),\gamma_{Y}\right\rangle_{\mathbb{H}}\leq \left\langle  x,\gamma_{Y}\right\rangle_{\mathbb{H}}},\label{eq:FBSH}
\end{equation}
\noindent where $\eta_{i},$ $i=1,\dots,n,$ are independent and identically distributed standard normal random variables. The computation of the $p$--value is obtained for each projection from
$$p_v(\gamma_\epsilon,\gamma_Y)=\#\left\lbrace \max_{x\in\mathbb{H}}|V_N^{*b}(x)|\geq\max_{x\in\mathbb{H}}|V_N(x)| \right\rbrace/B,$$ \noindent where, as commented,   $B=2000$  bootstrap replicates have been considered. The numbers  of projections tested is  $NP= 1, 2, 3, 4, 5, 10 ,15.$ To obtain only one   $p$--value  the   False Discovery Rate is computed (i.e.,  the expected proportion of false positives among the rejected hypotheses).

For the case of composite null hypothesis  $\widetilde{H}_{0},$ the process generated  is an AR$\mathbb{H}$(1) process, that uses the same $\Gamma$ employed to the alternative hypothesis in the case of simple null hypothesis. For the alternative, we have also considered  an AR$\mathbb{H}$(1) process with the same $\Gamma$ but evaluated in the square process, i.e.
$$Y_{t}(u)=\int \Gamma(u,v)\lrp{\frac{Y_{t-1}}{a_{t-1}}}^2(v)dv+\varepsilon_t(u),$$
where the parameter $a_{t-1}\in\{1, 2\}$ avoids that the process escapes to nonstationarity, being $a_{t-1}=2$ only when $\|Y_{t-1}\|>2$. Although, this is a clearly non AR$\mathbb{H}$(1) process, it is pretty close to an AR$\mathbb{H}$(1) due to the scale of $Y_t$.
Now, the statistics in equation (\ref{eq:FBSH}) must change to incorporate the variability due to the estimation of parameter $\Gamma$. So, following \cite{Escanciano06}, $V_{n}^{*b}(x)$ is changed by $\widetilde{V}_{n}^{*b}(x)$ given by

\begin{equation}
	\widetilde{V}_{n}^{*b}(x)=
	\frac{1}{\sqrt{n}}\sum_{i=1}^{n}\eta_i \left\langle Y^{*b}_{i}-\widehat{\Gamma}^{*b}_{n} (Y_{i-1}),\gamma_{\varepsilon }\right\rangle_{\mathbb{H}}\ind{\omega \in \Omega;\ \left\langle Y_{i-1}(\omega ),\gamma_{Y}\right\rangle_{\mathbb{H}}\leq \left\langle  x,\gamma_{Y}\right\rangle_{\mathbb{H}}},\label{eq:FBCH}
\end{equation}
\noindent where $\widehat{\Gamma}^{*b}_{n}$ is the estimation of the functional parameter and $Y^{*b}_{i}=\widehat{\Gamma}^{*b}_{n}(Y_{i-1})+\varepsilon^{*b}_i$ with $\varepsilon^{*b}_i$ being the resampled errors from the estimated model.

\begin{table}[ht]
	\centering
	\begin{tabular}{|lccccccc|}
		\hline
		$NP$ & 1 & 2 & 3 & 4 & 5 & 10 & 15 \\
		\hline
		$n=50$ & 0.050 & 0.054 & 0.044 & 0.038 & 0.034 & 0.034 & 0.042 \\
		$n=100$ & 0.060 & 0.056 & 0.054 & 0.062 & 0.058 & 0.060 & 0.056 \\
		$n=200$ & 0.050 & 0.054 & 0.056 & 0.052 & 0.048 & 0.052 & 0.040 \\
		\hline
	\end{tabular}
	\caption{Simple Hypothesis. Empirical test size based on $R=500$ repetitions. Number of projections by column, and functional sample size by rows.}
	\label{tabH0}
\end{table}

% latex table generated in R 4.4.0 by xtable 1.8-4 package
% Mon Jun  3 18:52:20 2024
\begin{table}[ht]
	\centering
	\begin{tabular}{|lccccccc|}
		\hline
		$NP$ & 1 & 2 & 3 & 4 & 5 & 10 & 15 \\
		\hline
		$n=50$ & 0.434 & 0.492 & 0.526 & 0.524 & 0.530 & 0.562 & 0.552 \\
		$n=100$ & 0.486 & 0.630 & 0.678 & 0.728 & 0.764 & 0.836 & 0.848 \\
		$n =200$ & 0.650 & 0.808 & 0.896 & 0.922 & 0.932 & 0.984 & 0.994 \\
		\hline
	\end{tabular}
	\caption{Simple Hypothesis. Empirical test  power based on $R=500$ repetitions. Number of projections by column, and functional sample size by rows.}
	\label{tabH1}
\end{table}

\begin{table}[ht]
	\centering
	\begin{tabular}{|lccccccc|}
		\hline
		$NP$ & 1 & 2 & 3 & 4 & 5 & 10 & 15 \\
		\hline
		$n=50$ & 0.054 & 0.034 & 0.034 & 0.048 & 0.056 & 0.038 & 0.032 \\
		$n=100$ & 0.056 & 0.062 & 0.058 & 0.050 & 0.068 & 0.046 & 0.046 \\
		$n=200$ & 0.050 & 0.050 & 0.052 & 0.046 & 0.054 & 0.048 & 0.048 \\
		\hline
	\end{tabular}
	\caption{Composite Hypothesis. Empirical test size based on $R=500$ repetitions. Number of projections by column, and functional sample size by rows.}
	\label{tabHC0}
\end{table}

\begin{table}[ht]
	\centering
	\begin{tabular}{|lccccccc|}
		\hline
		$NP$ & 1 & 2 & 3 & 4 & 5 & 10 & 15 \\
		\hline
		$n=50$ & 0.114 & 0.096 & 0.084 & 0.062 & 0.064 & 0.062 & 0.066 \\
		$n=100$ & 0.152 & 0.182 & 0.196 & 0.204 & 0.214 & 0.232 & 0.264 \\
		$n=200$ & 0.242 & 0.340 & 0.378 & 0.400 & 0.432 & 0.520 & 0.508 \\
		$n=300$ &0.302  & 0.438   & 0.528  & 0.562  & 0.584  & 0.666   & 0.706\\
		$n=500$ &  0.448 &  0.610 & 0.680 &  0.756 & 0.806 &  0.892 & 0.938\\
		$n=750$ & 0.474 & 0.682 &  0.774  &  0.830  &  0.888  & 0.970 & 0.992\\
		\hline
	\end{tabular}
	\caption{Composite Hypothesis. Empirical test power based on $R=500$ repetitions. Number of projections by column, and functional sample size by rows.}
	\label{tabHC1}
\end{table}

In Tables~\ref{tabH0} and ~\ref{tabH1}, one can respectively find the   $p$--values and power  approximations computed,  based on  $R=500$ repetitions of the implemented testing procedure,  for functional sample sizes $n=50,100,200.$
The Confidence Interval, based on $R=500$ repetitions, obtained for $p=0.05$ is $[0.031, 0.069].$ Specifically,  Table~\ref{tabH0} shows the rejection probability when  $H_{0}$ is true. One can observe  in Table~\ref{tabH0}  that the computed estimates of the test size are slightly below the true parameter  value  $p=0.05$   for the functional sample size $n=50$.
While, for  functional sample sizes $n=100, 200,$ an improvement is observed in the computed estimates of $p=0.05$ for all number of random projections.  For the functional sample sizes analyzed, no patterns are observed regarding computed estimates and the number of random projections.  The  empirical power of the test, for the same functional sample sizes $n=50,100,200,$ and number of repetitions, are displayed in Table~\ref{tabH1}. As  expected,  the  performance of the test is improved  when  the functional sample size $n$ increases. Tables~\ref{tabHC0} and ~\ref{tabHC1} show sizes and powers for the composite hypothesis. The test seems well calibrated as every element in Table~\ref{tabHC0} is inside the Confidence Interval for $p=0.05$ although, for $n=50,$ the test shows a tendency to be below $p=0.05$ similar to what happens in simple null hypothesis. Given the rate of convergence of the empirical eigenfunctions to the theoretical ones  (see Lemma 4.3 and Theorem 4.5 in \cite{Bosq2000}),  larger  functional sample sizes are required to obtain competitive empirical power values.
In Table  \ref{tabHC1}, we illustrate this fact considering additional  functional sample size values $n= 300, 500, 750.$

\subsection{Spherical functional time series \mbox{\emph{SP}}$\mathbb{H}$\mbox{\emph{AR(1)}}}
\label{sph1}
In this section we analyze the performance of the proposed GoF procedure in the special case where $\mathbb{H}=L^{2}(\mathbb{S}_{d},d\nu ,\mathbb{R})$ is  the space   of real--valued square integrable functions  on  the $d$--dimensional sphere in $\mathbb{R}^{d+1}.$
Here, $d\nu$  denotes the normalized Riemannian measure on $\mathbb{S}_{d}.$ In what follows, we will denote by
$\{S_{k,j}^{d}, \ j=1,\dots, \Lambda  (k,d),\ k\in \mathbb{N}_{0}\}$
the  orthonormal basis of   eigenfunctions of the Laplace--Beltrami operator $\Delta_{d}$ on $L^{2}\left(\mathbb{S}_{d},d\nu , \mathbb{R}\right),$ with $\Lambda (k,d)$ denoting the dimension of the $k$th eigenspace of the Laplace--Beltrami operator.  Here, we consider  $d=2.$

The time--varying random projections of the SP$\mathbb{H}$AR(1) process, with repect to the eigenfunctions of the Laplace Beltrami operator,  have been generated by using the MatLab function \emph{arima}. They are evaluated at  $n$ time instants,  where $n$ denotes the functional sample size.  The  functional values of the  innovation process are also  generated from its time--varying projections with respect to such a basis at  $n$ times.

In the generations of SP$\mathbb{H}$AR(1) model we have considered the spherical harmonics $\{S^{2}_{0,0}, S^{2}_{1,0}, S_{1,1}^{2}, S_{2,1}^{2}, S_{2,2}^{2},$ $S_{3,1}^{2}, S_{3,2}^{2}, S_{3,3}^{2}\},$ which are localized in the first four eigenspaces ($k=4$) of  the Laplace Beltrami operator $\Delta_{2}$  on $L^{2}\left(\mathbb{S}_{2},d\nu , \mathbb{R}\right)$ (see
left--hand--side of Figure \ref{f6app2ex3BS62}).
Under the simple null hypothesis $H_{0}=\Gamma_{0},$  one realization (evaluated at $100$ spherical spatial  nodes) of the SP$\mathbb{H}$AR(1) process projected into the corresponding direct sum  $\bigoplus_{k=0}^{3}\mathcal{H}_{k}$ of Laplace Beltrami eigenspaces   is   displayed at the right--hand--side of Figure \ref{f6app2ex3BS62}. Here, $\mathcal{H}_{k}$ denotes the $k$th eigenspace of the Laplace Beltrami operator on $L^{2}(\mathbb{S}_{2},d\nu ,\mathbb{R}),$ $k\in \mathbb{N}_{0}.$ The eigenvalues of the invariant kernel defining the autoregressive operator $\Gamma_{0}$ under simple  $H_{0},$ associated with the Laplace Beltrami  eigenfunctions,  are given by $\{\lambda_{k-1}(\Gamma_{0})=0.7\left(\frac{k+1}{k} \right)^{-3/2},\ k\geq 1\}.$

\begin{figure}[!h]
	\begin{center}
		\includegraphics[width=6.6cm,height=9cm]{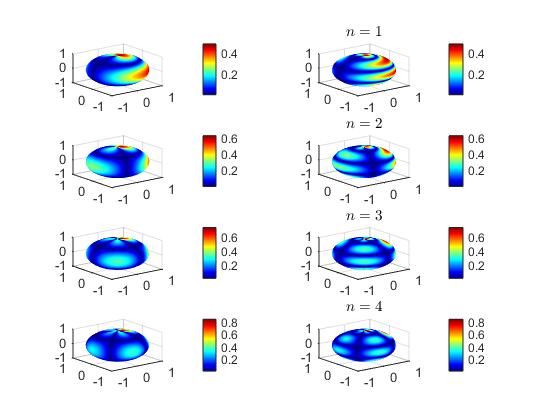}
		\includegraphics[width=6.6cm,height=9cm]{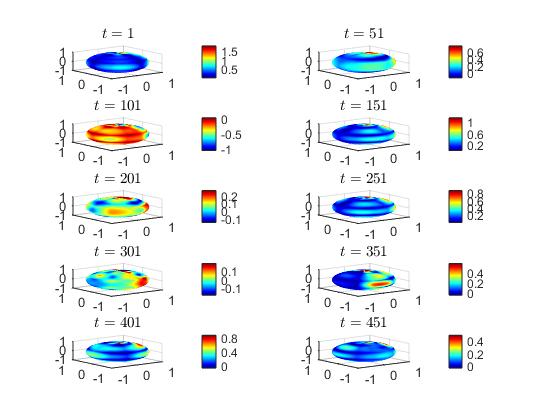} 		
	\end{center}
	\caption{Elements of the truncated orthonormal eigenfunction spherical basis (left--hand--side), and one realization of the functional values of the  generated SP$\mathbb{H}$AR(1) process, projected into  $\bigoplus_{k=0}^{3}\mathcal{H}_{k},$ at times $t=1,51, 101,151, 202,251, 301,351, 401,451,$ from a functional sample of size $n=500$ (right--hand--side)}
	\label{f6app2ex3BS62}
\end{figure}

We first consider  $H_{0}: \Gamma =\Gamma_{0},$ and, under   $H_{1},$ the response is of the form
$Y_{t}^{H_{1}}=[\Gamma_{0}+\widetilde{\Gamma }_{\mathbb{S}_{2}}](Y^{H_{1}}_{t-1})+\varepsilon_{t}^{H_{1}},$
with $\varepsilon_{t}^{H_{1}}=\widetilde{\Gamma }_{\mathbb{S}_{2}}(\varepsilon_{t-1}^{H_{1}})+\eta_{t},$ for  $t\in \mathbb{Z},$  and $\left\{\eta _{t},\ t\in \mathbb{Z}\right\}$  being an $\mathbb{H}$--SWN.
Both operators, $\Gamma_{0}$ and $\widetilde{\Gamma }_{\mathbb{S}_{2}},$ are   invariant bounded linear operators under the group of rotations in the sphere.  The test statistics
\begin{eqnarray}&&\mathcal{T}(\mathbf{h})= \sup_{t\in [0,1]} \left|s_{n}^{-1}(\mathbf{h})\left\langle V_{n}(P_{Y_{0}}^{-1}(t)), \mathbf{h}\right\rangle_{\mathbb{H}}\right|= 	 \sup_{t\in [0,1]} \left|\frac{s_{n}^{-1}(\mathbf{h})}{\sqrt{n}}\sum_{i=1}^{n}\left\langle \varepsilon_{i}(\Gamma_{0}),\mathbf{h}\right\rangle_{\mathbb{H}}\ind{E^{i-1}(P_{Y_{0}}^{-1}(t))}\right|,
	\label{statistic}
\end{eqnarray}
\noindent is evaluated at different  random projections  generated from $\mathbb{H}$--valued Fractional Brownian Motion (FBM) with autocovariance operator $C_{0}^{\varepsilon}.$  The critical value corresponding to $\alpha =0.05$ is $SW_{\alpha}= 2.2414,$ for the supremum norm  of   Brownian motion $W_{t}$ on the interval $[0,1]$.

Under simple null hypothesis (totally specified autocorrelation operator),  Table \ref{tabH0SPH}
displays the empirical test size for the functional sample sizes   $n=50, 100, 200.$ In this case, no  tendency to be below $\alpha =0.05$ is observed
for $n=50.$
In Table \ref{tabH1SPH},  one can find  the empirical test  power values, based on $R=500$ repetitions,  for the functional sample sizes   $n=50, 100, 200, 300, 500, 750.$  As in the previous  $\mathbb{H}=L^{2}([0,1])$ scenario analyzed in Table  \ref{tabH1}, one can observe in Table \ref{tabH1SPH} a better  performance of the test statistics when  the  functional sample size increases.

\begin{table}[ht]
	\centering
	\begin{tabular}{|lccccccc|}
		\hline
		$NP$ & 1 & 2 & 3 & 4 & 5 & 10 & 15 \\
		\hline
		$n=50$ & 0.060 & 0.040  & 0.040 & 0.060 & 0.040 & 0.040 & 0.060 \\
		$n=100$ & 0.040 & 0.040 & 0.040 & 0.060 & 0.060 & 0.060 &  0.040 \\
		$n=200$ & 0.050 & 0.054 & 0.056 & 0.052 & 0.048 & 0.052 & 0.040 \\
		\hline
	\end{tabular}
	\caption{Simple Hypothesis SP$\mathbb{H}$AR(1). Empirical test size based on $R=500$ repetitions. Number of projections by column, and functional sample size by rows.}
	\label{tabH0SPH}
\end{table}

%$n =600$ & 0.940 & 0.960 & 0.950 & 0.980 &  1.000 & 1.000 &   0.995 \\
\begin{table}[ht]
	\centering
	\begin{tabular}{|lccccccc|}
		\hline
		$NP$ & 1 & 2 & 3 & 4 & 5 & 10 & 15 \\
		\hline
		$n=50$ & 0.376   & 0.336 &0.718 &0.454 & 0.542 &  0.594&  0.584 \\
		$n=100$ &0.566 & 0.464 & 0.658  & 0.700    & 0.428   &   0.450  & 0.762 \\
		$n=200$  &   0.562 & 0.772 &0.806 & 0.772 &   0.744 &  0.882 & 0.732\\
		$n=300$ & 0.838 & 0.546 & 0.682&  0.944 &   0.864 & 0.912 & 0.892 \\
		$n=500$ & 0.890 & 0.998 & 0.914& 0.994 & 0.978 &   0.984 &   0.984\\	
		$n=750$ & 0.851 & 0.926 & 0.974 & 0.938 & 0.850 & 0.996 & 0.992 \\
		\hline
	\end{tabular}
	\caption{Simple Hypothesis SP$\mathbb{H}$AR(1). Empirical test  power based on $R=500$ repetitions. Number of projections by column, and functional sample size by rows.}
	\label{tabH1SPH}
\end{table}

\begin{table}[ht]
	\centering
	\begin{tabular}{|lccccccc|}
		\hline
		$NP$ & 1 & 2 & 3 & 4 & 5 & 10 & 15 \\
		\hline
		$n=50$ &   0.018 &   0.014 &  0.014   & 0.022       &    0.028 &  0.022 & 0.022\\
		$n=100$ &  0.032  &  0.034  &  0.026  &  0.022 &  0.034  &   0.030 &  0.024\\
		$n=200$ &  0.038 & 0.052 & 0.032 & 0.042 & 0.040 & 0.038 & 0.040 \\
		$n=300$ &
		0.042 &   0.054   &  0.032   & 0.026   & 0.056  &  0.040  & 0.042\\	
		$n=500$ & 0.046 & 0.046 & 0.050 & 0.054 & 0.046 & 0.044 & 0.044 \\
		$n=750$ &0.050 & 0.045 & 0.050 & 0.045 & 0.045&  0.045& 0.055\\
		\hline
	\end{tabular}
	\caption{Composite Hypothesis  SP$\mathbb{H}$AR(1). Empirical test size based on $R=500$ repetitions. Number of projections by column, and functional sample size by rows.}
	\label{tabHC0SPH1}
\end{table}

\begin{table}[ht]
	\centering
	\begin{tabular}{|lccccccc|}
		\hline
		$NP$ & 1 & 2 & 3 & 4 & 5 & 10 & 15 \\
		\hline
		$n=50$ & 0.636 & 0.706 & 0.522 & 0.506 &  0.484 &0.584 &  0.576\\
		$n=100$ &   0.668 & 0.736 &  0.630 & 0.652 & 0.556 & 0.806 & 0.678 \\
		$n=200$ & 0.824 & 0.870 &0.646& 0.912 & 0.612 &  0.062 &  0.638\\
		$n=300$ &  0.962 &0.826  &   0.670 & 0.902  & 0.882  & 0.914 &  0.764 \\
		$n=500$ & 0.994 &  0.966 & 0.998 & 0.978 & 0.986 & 0.918
		&   0.840 \\
		$n=750$ &   0.998     &  0.986  & 0.996 & 0.826 & 0.786 & 0.976   &  0.960 \\
		\hline
	\end{tabular}
	\caption{Composite Hypothesis  SP$\mathbb{H}$AR(1). Empirical test power based on $R=500$ repetitions. Number of projections by column, and functional sample size by rows.}
	\label{tabHC1SPH1}
\end{table}

Under condition (c$_{0}$) in \cite{Bosq2000}  (see p.74, Chapter 3), considering the same autocorrelation operator for the alternative as in  the case of simple null hypothesis, we  test
\begin{eqnarray}
	&&\widetilde{H}_{0}: \|\Gamma_{0}\|_{\mathcal{L}(\mathbb{H})}\leq 1/4, \nonumber\\
	&&\widetilde{H}_{1}: \|\Gamma_{0}\|_{\mathcal{L}(\mathbb{H})}> 1/4.\nonumber\end{eqnarray}
\noindent Since,  under $\widetilde{H}_{1},$  we have considered $\Gamma_{\mathbb{S}_{2}}$ has eigenvalues $\{\lambda_{k-1}(\Gamma_{\mathbb{S}_{2}})=0.5\left(\frac{k+1}{k}\right)^{-3/2}, k \geq 1\},$
$\|\Gamma_{1}\|_{\mathcal{L}(\mathbb{H})}= (0.7+0.5)[1/2]^{3/2}=0.4243>1/4.$
Note that, for the theoretical model, under $\widetilde{H}_{0},$   $\left\{\lambda_{k-1}(\Gamma_{0})= 0.7\left(\frac{k+1}{k} \right)^{-3/2},\ k\geq 1\right\},$  hence, $\|\Gamma_{0}\|_{\mathcal{L}(\mathbb{H})}=0.7[1/2]^{3/2}= 0.2475<1/4.$ But, in practice, we proceed by estimating the  eigenvalues $\left\{\lambda_{k}(\Gamma_{0} ),\ k\in \mathbb{N}_{0}\right\}$ of $\Gamma_{0} $ from the following equation
(see equation (3.13) in  \cite{Bosq2000}):
\begin{eqnarray}
	&&C_{0}^{\varepsilon}=C_{0}^{Y}-\Gamma_{0}C_{0}^{Y}\Gamma_{0}^{\star},
	\label{eqemp}
\end{eqnarray}
\noindent where $\Gamma_{0} ^{\star}$ denotes the adjoint of $\Gamma_{0} ,$ with
$\Gamma_{0}=\Gamma_{0}^{\star}$ in the generations. Specifically, from (\ref{eqemp}),

\begin{equation}\lambda_{k}(\Gamma_{0})=
	\left[1-\lambda_{k}(C_{0}^{\varepsilon})[\lambda_{k}(C_{0}^{Y})]^{-1}
	\right]^{1/2},\quad k\in \mathbb{N}_{0},\label{pps}
\end{equation}

\noindent  which is estimated from the following empirical pure point spectra:
\begin{eqnarray}&&
	\widehat{\lambda}_{k}(C_{0}^{\varepsilon})=\frac{1}{\Lambda  (k,d)}\sum_{j=1}^{\Lambda  (k,d)}
	\frac{1}{n}\sum_{i=1}^{n}\frac{1}{R_{2}}\sum_{l=1}^{R_{2}}\left[\left\langle \varepsilon_{i,l},S_{k,j}^{d}\right\rangle_{L^{2}(\mathbb{S}_{2}, d\nu)}\right]^{2},\ k\in \mathbb{N}_{0},\nonumber \\
	&&\widehat{\lambda_{k}}(C_{0}^{Y})=\frac{1}{\Lambda  (k,d)}\sum_{j=1}^{\Lambda (k,d)}
	\frac{1}{n}\sum_{i=1}^{n}\frac{1}{R_{2}}\sum_{l=1}^{R_{2}}\left[\left\langle Y_{i,l},S_{k,j}^{d}\right\rangle_{L^{2}(\mathbb{S}_{2}, d\nu)}\right]^{2},\ k\in \mathbb{N}_{0},\nonumber \\
	\label{epps}
\end{eqnarray}
\noindent where, for $i=1,\dots,n,$   $\{Y_{i,l},\ l=1,\dots,R_{2}\}$ and $\{\varepsilon_{i,l},\ l=1,\dots,R_{2}\}$ denote the generated  $R_{2}=100$ repetitions  of $Y_{i}$ and  $\varepsilon_{i}$ under $\widetilde{H}_{0},$ respectively, for the estimation of $\Gamma $  under $\widetilde{H}_{0},$ in the SP$\mathbb{H}$AR(1) model.  Lemma 8.1(3) in \cite{Bosq2000}  holds  from equations (\ref{pps})--(\ref{epps}).

Tables~\ref{tabHC0SPH1} and ~\ref{tabHC1SPH1} respectively show the empirical test sizes and powers for  composite null hypothesis under a SP$\mathbb{H}$AR(1) scenario.
In Table~\ref{tabHC0SPH1}, we observe empirical test sizes close to the theoretical value $\alpha =0.05$ for the sample sizes $n=200, 300,500,750.$   The empirical test powers showed in Table~\ref{tabHC1SPH1} are also  computed for the functional sample sizes   $n=50,100,200, 300, 500, 750,$ considering the same number of repetitions $R=500.$  As expected, the empirical test powers are improved when   the functional sample size increases. Under an SP$\mathbb{H}$AR(1) scenario, a better performance in the case of composite  null hypothesis is observed  for small sample sizes $n=50,100, 200,$ since we work under  a lower level of misspecification of the autocorrelation operator $\Gamma $ than in Table \ref{tabHC1} for  $\mathbb{H}=L^{2}([0,1]).$ Under simple $H_{0}$ the reverse  situation is observed in Tables \ref{tabH1}  and \ref{tabH1SPH}. Although, given the variability displayed
by columns in these two tables, for the sample sizes $n=50,100, 200,$ one can not conclude from these numerical results  any significative performance difference between these two testing procedures.

 {\small
\bibliographystyle{myjmva}
\bibliography{GMRMFB}

@Book{Bosq2000,
  author    = {Bosq, D.},
  publisher = {Springer, New York},
  title     = {Linear processes in function spaces},
  year      = {2000},
  doi       = {10.1007/978-1-4612-1154-9},
  keywords  = {General second-order stochastic processes},
  mrnumber  = {1783138},
  pages     = {xiv+283 pp.},
}

@Article{CaponeraMarinucci,
  author   = {Caponera, Alessia and Marinucci, Domenico},
  journal  = {Ann. Statist.},
  title    = {Asymptotics for spherical functional autoregressions},
  year     = {2021},
  number   = {1},
  pages    = {346--369},
  volume   = {49},
  doi      = {10.1214/20-AOS1959},
  keywords = {Inference from stochastic processes and spectral analysis},
  mrnumber = {4206681},
}

@Article{Constantinou18,
  author   = {Constantinou, Panayiotis and Kokoszka, Piotr and Reimherr, Matthew},
  journal  = {J. Time Series Anal.},
  title    = {Testing separability of functional time series},
  year     = {2018},
  number   = {5},
  pages    = {731--747},
  volume   = {39},
  doi      = {10.1111/jtsa.12302},
  keywords = {Time series, auto-correlation, regression, etc. in statistics (GARCH)},
  mrnumber = {3849524},
}

@Article{Cuestalbertos,
  author   = {Cuesta--Albertos, Juan Antonio and Fraiman, Ricardo and Ransford, Thomas},
  journal  = {J. Theoret. Probab.},
  title    = {A sharp form of the {C}ram\'{e}r--{W}old theorem},
  year     = {2007},
  number   = {2},
  pages    = {201--209},
  volume   = {20},
  doi      = {10.1007/s10959-007-0060-7},
  keywords = {Probability measures on topological spaces},
  mrnumber = {2324526},
}

@Article{Cuesta,
  author   = {Cuesta-Albertos, Juan A. and Garc\'{i}a-Portugu\'{e}s, Eduardo and Febrero-Bande, Manuel and Gonz\'{a}lez-Manteiga, Wenceslao},
  journal  = {Ann. Statist.},
  title    = {Goodness-of-fit tests for the functional linear model based on randomly projected empirical processes},
  year     = {2019},
  number   = {1},
  pages    = {439--467},
  volume   = {47},
  doi      = {10.1214/18-AOS1693},
  keywords = {Nonparametric hypothesis testing},
  mrnumber = {3909938},
}

@Article{Dedecker,
  author   = {Dedecker, J\'{e}r\'{o}me and Merlev\'{e}de, Florence},
  journal  = {Stochastic Process. Appl.},
  title    = {The conditional central limit theorem in {H}ilbert spaces},
  year     = {2003},
  number   = {2},
  pages    = {229--262},
  volume   = {108},
  doi      = {10.1016/j.spa.2003.07.004},
  keywords = {Central limit and other weak theorems},
  mrnumber = {2019054},
}

@Article{Escanciano06,
  author   = {Escanciano, J. Carlos},
  journal  = {J. Amer. Statist. Assoc.},
  title    = {Goodness-of-fit tests for linear and nonlinear time series models},
  year     = {2006},
  number   = {474},
  pages    = {531--541},
  volume   = {101},
  doi      = {10.1198/016214505000001050},
  mrnumber = {2256173},
}

@Article{Gorecki18,
  author   = {G\'{o}recki, Tomasz and H\"{o}rmann, Siegfried and Horv\'{a}th, Lajos and Kokoszka, Piotr},
  journal  = {J. Time Series Anal.},
  title    = {Testing normality of functional time series},
  year     = {2018},
  number   = {4},
  pages    = {471--487},
  volume   = {39},
  doi      = {10.1111/jtsa.12281},
  keywords = {Time series, auto-correlation, regression, etc. in statistics (GARCH)},
  mrnumber = {3819053},
}

@Article{Hlavka21,
  author   = {Hl\'{a}vka, Zdensk and Huskov\'{a}, Marie and Meintanis, Simos G.},
  journal  = {TEST},
  title    = {Testing serial independence with functional data},
  year     = {2021},
  number   = {3},
  pages    = {603--629},
  volume   = {30},
  doi      = {10.1007/s11749-020-00732-0},
  mrnumber = {4297270},
}

@Article{Horman18,
  author   = {H\"{o}rmann, Siegfried and Kokoszka, Piotr and Nisol, Gilles},
  journal  = {Ann. Statist.},
  title    = {Testing for periodicity in functional time series},
  year     = {2018},
  number   = {6A},
  pages    = {2960--2984},
  volume   = {46},
  doi      = {10.1214/17-AOS1645},
  keywords = {Inference from stochastic processes and spectral analysis},
  mrnumber = {3851761},
}

@Article{Kim23,
  author   = {Kim, Mihyun and Kokoszka, Piotr and Rice, Gregory},
  journal  = {Stat. Surv.},
  title    = {White noise testing for functional time series},
  year     = {2023},
  pages    = {119--168},
  volume   = {17},
  doi      = {10.1214/23-ss143},
  keywords = {Research exposition (monographs, survey articles) pertaining to statistics},
  mrnumber = {4587447},
}

@Article{Kokoszka10,
  author   = {H\"{o}rmann, Siegfried and Kokoszka, Piotr},
  journal  = {Ann. Statist.},
  title    = {Weakly dependent functional data},
  year     = {2010},
  number   = {3},
  pages    = {1845--1884},
  volume   = {38},
  doi      = {10.1214/09-AOS768},
  keywords = {Time series, auto-correlation, regression, etc. in statistics (GARCH)},
  mrnumber = {2662361},
}

@Article{Horvath2013,
  author   = {Horv\'{a}th, Lajos and Huskov\'{a}, Marie and Rice, Gregory},
  journal  = {J. Multivariate Anal.},
  title    = {Test of independence for functional data},
  year     = {2013},
  pages    = {100--119},
  volume   = {117},
  doi      = {10.1016/j.jmva.2013.02.005},
  keywords = {Asymptotic properties of nonparametric inference},
  mrnumber = {3053537},
}

@Article{Horvath2014,
  author   = {Horv\'{a}th, Lajos and Kokoszka, Piotr and Rice, Gregory},
  journal  = {J. Econometrics},
  title    = {Testing stationarity of functional time series},
  year     = {2014},
  number   = {1},
  pages    = {66--82},
  volume   = {179},
  doi      = {10.1016/j.jeconom.2013.11.002},
  keywords = {Nonparametric hypothesis testing},
  mrnumber = {3153649},
}

@Book{Horvath2012a,
  author    = {Horv\'{a}th, Lajos and Kokoszka, Piotr},
  publisher = {Springer, New York},
  title     = {Inference for functional data with applications},
  year      = {2012},
  doi       = {10.1007/978-1-4614-3655-3},
  keywords  = {Research exposition (monographs, survey articles) pertaining to statistics},
  mrnumber  = {2920735},
  pages     = {xiv+422 pp.},
}

@Article{Kokoszka2013,
  author   = {Kokoszka, Piotr and Reimherr, Matthew},
  journal  = {J. Time Series Anal.},
  title    = {Determining the order of the functional autoregressive model},
  year     = {2013},
  number   = {1},
  pages    = {116--129},
  volume   = {34},
  doi      = {10.1111/j.1467-9892.2012.00816.x},
  keywords = {Time series, auto-correlation, regression, etc. in statistics (GARCH)},
  mrnumber = {3008019},
}

@Article{Kokoszka2017,
  author   = {Kokoszka, Piotr and Rice, Gregory and Shang, Han Lin},
  journal  = {J. Multivariate Anal.},
  title    = {Inference for the autocovariance of a functional time series under conditional heteroscedasticity},
  year     = {2017},
  pages    = {32--50},
  volume   = {162},
  doi      = {10.1016/j.jmva.2017.08.004},
  keywords = {Time series, auto-correlation, regression, etc. in statistics (GARCH)},
  mrnumber = {3719333},
}

@Article{KoulStute1999,
  author   = {Koul, Hira L. and Stute, Winfried},
  journal  = {Ann. Statist.},
  title    = {Nonparametric model checks for time series},
  year     = {1999},
  number   = {1},
  pages    = {204--236},
  volume   = {27},
  doi      = {10.1214/aos/1018031108},
  keywords = {Time series, auto-correlation, regression, etc. in statistics (GARCH)},
  mrnumber = {1701108},
}

@Article{RuizMD,
  author   = {Ruiz-Medina, M. D. and Miranda, D. and Espejo, R. M.},
  journal  = {TEST},
  title    = {Dynamical multiple regression in function spaces, under kernel regressors, with  {ARH}(1) errors},
  year     = {2019},
  number   = {3},
  pages    = {943--968},
  volume   = {28},
  doi      = {10.1007/s11749-018-0614-2},
  keywords = {Prediction theory (aspects of stochastic processes)},
  mrnumber = {3992145},
}

@Article{Walk77,
  author   = {Walk, H.},
  journal  = {Z. Wahrscheinlichkeitstheorie und Verw. Gebiete},
  title    = {An invariance principle for the {R}obbins-{M}onro process in a {H}ilbert space},
  year     = {1977},
  number   = {2},
  pages    = {135--150},
  volume   = {39},
  doi      = {10.1007/BF00535182},
  keywords = {Stochastic approximation},
  mrnumber = {494742},
}

@Article{Zhang15,
  author   = {Zhang, Xianyang and Shao, Xiaofeng},
  journal  = {Bernoulli},
  title    = {Two sample inference for the second-order property of temporally dependent functional data},
  year     = {2015},
  number   = {2},
  pages    = {909--929},
  volume   = {21},
  doi      = {10.3150/13-BEJ592},
  keywords = {Nonparametric estimation},
  mrnumber = {3338651},
}

@Article{Zhang16,
  author   = {Zhang, Xianyang},
  journal  = {J. Econometrics},
  title    = {White noise testing and model diagnostic checking for functional time series},
  year     = {2016},
  number   = {1},
  pages    = {76--95},
  volume   = {194},
  doi      = {10.1016/j.jeconom.2016.04.004},
  keywords = {Time series, auto-correlation, regression, etc. in statistics (GARCH)},
  mrnumber = {3523520},
}

@Article{MaMalyarenko,
  author   = {Ma, Chunsheng and Malyarenko, Anatoliy},
  journal  = {J. Theoret. Probab.},
  title    = {Time-varying isotropic vector random fields on compact two-point homogeneous spaces},
  year     = {2020},
  number   = {1},
  pages    = {319--339},
  volume   = {33},
  doi      = {10.1007/s10959-018-0872-7},
  keywords = {Random fields},
  mrnumber = {4064303},
}

@Book{Ferraty06,
  author    = {Ferraty, Fr\'{e}d\'{e}ric and Vieu, Philippe},
  publisher = {Springer New York, NY},
  title     = {Nonparametric functional data analysis: Theory and practice},
  year      = {2006},
  isbn      = {978-0-387-36620-3},
  series    = {Springer Series in Statistics},
  doi       = {10.1007/0-387-36620-2},
  pages     = {xx+260 pp.},
}

@Article{Li2020,
  author   = {Li, Degui and Robinson, Peter M. and Shang, Han Lin},
  journal  = {J. Amer. Statist. Assoc.},
  title    = {Long-range dependent curve time series},
  year     = {2020},
  number   = {530},
  pages    = {957--971},
  volume   = {115},
  doi      = {10.1080/01621459.2019.1604362},
  keywords = {Time series, auto-correlation, regression, etc. in statistics (GARCH)},
  mrnumber = {4107692},
}

@Article{RuizMedina2022,
  author   = {Ruiz-Medina, M. Dolores},
  journal  = {Fract. Calc. Appl. Anal.},
  title    = {Spectral analysis of multifractional LRD functional time series},
  year     = {2022},
  number   = {4},
  pages    = {1426--1458},
  volume   = {25},
  doi      = {10.1007/s13540-022-00053-z},
  keywords = {Stationary stochastic processes},
  mrnumber = {4468520},
}

@Article{OvalleRM24,
  author   = {Ovalle-Mu\~{n}oz, Diana P. and Ruiz-Medina, M. Dolores},
  journal  = {TEST},
  title    = {{LRD} spectral analysis of multifractional functional time series on manifolds},
  year     = {2024},
  number   = {2},
  pages    = {564--588},
  volume   = {33},
  doi      = {10.1007/s11749-023-00913-7},
  mrnumber = {4756648},
}

@Article{Kim24,
  author   = {Kim, Mihyun and Kokoszka, Piotr and Rice, Gregory},
  journal  = {Stat. Inference Stoch. Process.},
  title    = {Projection-based white noise and goodness-of-fit tests for functional time series},
  year     = {2024},
  number   = {3},
  pages    = {693--724},
  volume   = {27},
  doi      = {10.1007/s11203-024-09315-4},
  keywords = {Nonparametric hypothesis testing},
  mrnumber = {4814167},
}

@Article{Alvarez25,
  author   = {\'{A}lvarez-Li\'{e}bana, J. and L\'{o}pez-P\'{e}rez, A. and Gonz\'{a}lez-Manteiga, W. and Febrero-Bande, M.},
  journal  = {Comput. Statist. Data Anal.},
  title    = {A goodness-of-fit test for functional time series with applications to {O}rnstein-{U}hlenbeck processes},
  year     = {2025},
  pages    = {Paper No. 108092, 17 pp.},
  volume   = {203},
  doi      = {10.1016/j.csda.2024.108092},
  mrnumber = {4822571},
}

@Article{GPortugues21,
  author   = {Garc\'{i}a-Portugu\'{e}s, Eduardo and \'{A}lvarez-Li\'{e}bana, Javier and \'{A}lvarez-P\'{e}rez, Gonzalo and Gonz\'{a}lez-Manteiga, Wenceslao},
  journal  = {Scand. J. Stat.},
  title    = {A goodness-of-fit test for the functional linear model with functional response},
  year     = {2021},
  number   = {2},
  pages    = {502--528},
  volume   = {48},
  doi      = {10.1111/sjos.12486},
  keywords = {Functional data analysis},
  mrnumber = {4275155},
}

@Book{DaPrato2002,
  author    = {Da Prato, Giuseppe and Zabczyk, Jerzy},
  publisher = {Cambridge University Press},
  title     = {Second Order Partial Differential Equations in Hilbert Spaces},
  year      = {2002},
  isbn      = {9780511543210},
  month     = jul,
  doi       = {10.1017/cbo9780511543210},
}
}
\end{document}